\newtheoremstyle{dotless}{}{}{\itshape}{}{\bfseries}{}{ }{} 
\theoremstyle{dotless}
\newtheorem{theorem}{Theorem}[section]
\newtheorem{lemma}[theorem]{Lemma}
\newtheorem{corollary}[theorem]{Corollary}
\newtheorem{defn}[theorem]{Definition}
\begin{document}

\pagenumbering{arabic} \setcounter{page}{1}

\title{A lower bound for the least prime in an arithmetic progression}

\author[J. Li]{Junxian Li}  
\address
{Univeristy of Illinois Urbana-Champaign,
	Urbana, Illinois 61801}\email
{jli135@illinois.edu} 
\author[K. Pratt]{Kyle Pratt}\address
{University of Illinois Urbana-Champaign,
	Urbana, Illinois 61801}
\email{kpratt4@illinois.edu}
\author[G. Shakan]{George Shakan}
\address
{University of Illinois Urbana-Champaign,
	Urbana, Illinois 61801}\email{george.shakan@gmail.com}

\maketitle


\begin{abstract}
Fix $k$ a positive integer, and let $\ell$ be coprime to $k$. Let $p(k,\ell)$ denote the smallest prime equivalent to $\ell \pmod{k}$, and set $P(k)$ to be the maximum of all the $p(k,\ell)$. We seek lower bounds for $P(k)$. In particular, we show that for almost every $k$ one has $P(k) \gg \phi(k) \log k \log_2 k \log_4 k / \log_3 k,$ answering a question of Ford, Green, Konyangin, Maynard, and Tao. We rely on their recent work on large gaps between primes. Our main new idea is to use sieve weights to capture not only primes, but also small multiples of primes. We also give a heuristic which suggests that $$\liminf_{k} \frac{P(k)}{ \phi(k) \log^2 k} = 1.$$
\end{abstract}

\tableofcontents

\section{Introduction}

Fix a positive integer $k$ and let $\ell$ be coprime to $k$. Let $p(k,\ell)$ denote the smallest prime equivalent to $\ell$ modulo $k$, and define
\begin{align*}
P(k) := \max_{ (\ell,k) = 1} p(k,\ell).
\end{align*}

Linnik \cite{Li} proved the remarkable upper bound $P(k) \ll k^L$, where $L>0$ is a fixed constant. Subsequent authors improved upon the value of $L$, including Chen \cite{Ch}, Graham \cite{Gr}, Heath-Brown \cite{Hb}, Jutila \cite{Ju}, Pan \cite{Pa}, and Wang \cite{Wa}. Recently Xylouris \cite{Xy} showed that $L \leq 5.18$, following a method of Heath-Brown. Chowla \cite{Cho} observed that the Generalized Riemann Hypothesis implies $P(k) \ll_\epsilon k^{2+\epsilon}$ for any fixed $\epsilon > 0$, and conjectured that $P(k) \ll_\epsilon k^{1+\epsilon}$. In Section \ref{Heuristic section} we provide a heuristic which suggests a more precise estimate for $P(k)$.

Less work has been done on lower bounds for $P(k)$. Here the aim is to improve upon the lower bound $P(k) \geq (1 + o(1))\phi(k)\log k$, which is a consequence of the prime number theorem. Let $\log_n x$ denote the $n$-iterated logarithm ($\log_2 x = \log \log x$, $\log_{n+1} x = \log (\log_n x)$). Prachar \cite{Pra} and Schinzel \cite{Sch} showed that for each $\ell$ there are infinitely many $k$ with
\begin{align*}
p'(k,\ell) &\gg k \log k \log_2 k \frac{\log_4 k}{(\log_3 k)^2},
\end{align*}
where $p'(k,\ell)$ is the first prime $q > k$ with $q \equiv \ell \pmod{k}$. Wagstaff \cite{Wag1} showed a similar result for prime $k$.

It is very likely that $P(k)/(\phi(k)\log k)$ tends to infinity as $k$ tends to infinity (see Section \ref{Heuristic section} below). A modification of the argument of Hensley and Richards \cite{HR} shows that $P(k)/(\phi(k) \log k)$ tends to infinity for prime $k$. Pomerance \cite{Po} made a significant contribution when he showed that $P(k)/(\phi(k)\log k)$ tends to infinity for almost every $k$. Specifically, let $Q$ be the set of integers $k$ with more than $\exp (\log_2 k /\log_3 k)$ distinct prime factors. Pomerance showed that
\begin{align}\label{Pom thm}
P(k) \geq (e^\gamma +o(1)) \phi(k) \log k \log_2 k \frac{\log_4 k}{(\log_3 k)^2}
\end{align}
for every $k \not \in Q$.
Granville and Pomerance \cite{Gra} later showed there are infinitely many arithmetic progressions $\ell \pmod{k}$ such that
\begin{align*}
p(k,\ell) \geq (2+o(1)) k \log k \log_2 k \frac{\log_4 k}{(\log_3 k)^2}.
\end{align*}

Our own improvement to the lower bound for $P(k)$ builds upon the methods of Pomerance \cite{Po}. His idea was to construct a long interval $I$ of composite integers, $I = \{a,a+1, \ldots , a+ n\}$, and to consider $k \cdot I + t$ for an appropriately chosen $t$ which is coprime to $k$.

Making use of methods developed for studying large gaps between consecutive primes \cite{Fo}, we prove the following theorem.

    \begin{theorem}{\label{thm}}
    Given $\epsilon>0$, there exists $k_0(\epsilon)$ such that for all integers $k>k_0(\epsilon)$  with no more than $\exp((\frac{1}{2}-\epsilon)\log_2k\log _4 k/\log_3  k)$ distinct prime factors, we have
    	$$P(k)\gg\phi(k) \log k \log _2 k \log _4 k/\log_3 k.$$	
The implied constant is effective.
   \end{theorem}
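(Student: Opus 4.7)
The plan is to construct, for every $k$ satisfying the hypothesis of the theorem, a residue class $t$ coprime to $k$ together with an interval $I = \{a, a+1, \ldots, a+n\}$ of length $n \gg \log k \log_2 k \log_4 k / \log_3 k$ such that $km + t$ is composite for every $m \in I$. This produces a gap of length $kn \gg \phi(k) \log k \log_2 k \log_4 k / \log_3 k$ in the sequence of primes lying in the residue class $t \pmod k$, yielding the claimed bound on $P(k)$. Following the Erd\H{o}s--Rankin strategy as executed by Pomerance, we attempt a covering argument: for each prime $p \leq y$ (for a suitable parameter $y$) with $p \nmid k$, we choose a residue $a_p \pmod p$, forced to satisfy $k a_p + t \equiv 0 \pmod p$, and require that every $m \in I$ satisfies $m \equiv a_p \pmod p$ for some such $p$. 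The class $t$ is chosen at the outset via the Chinese Remainder Theorem, and the hypothesis on the number of distinct prime factors of $k$ guarantees that discarding the primes dividing $k$ from the sieve is affordable.

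The efficiency of the covering hinges on how cleverly the residues $a_p$ are selected. The main technical input is the machinery of Ford--Green--Konyagin--Maynard--Tao on large gaps between primes, in which Maynard-type multidimensional sieve weights are used to pick the $a_p$ so that the set of surviving $m \in I$ is small; the remaining survivors are then killed off by a handful of larger primes. To transfer this machinery to the arithmetic progression setting we must redo the sieve calculation with the target being primes of the form $km + t$, a set sparser by a factor of $\phi(k)$. The main new idea is to enlarge this target: we design sieve weights that detect not only primes in $kI + t$, but also integers $km + t$ of the form $p \cdot r$, where $p$ is a prime in a carefully chosen dyadic range and $r$ is a small cofactor. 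Enlarging the target in this way increases the $L^1$ mass of the sieve weights while leaving their $L^2$ mass essentially unchanged, and the standard variance argument then yields a covering with substantially fewer survivors, saving a factor of $\log_3 k$ over Pomerance's bound.

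The central obstacle is the sieve analysis itself, namely the asymptotic evaluation of the first and second moments of the modified Maynard-type weights along the progression $t \pmod k$. This requires inputs from the Bombieri--Vinogradov theorem uniformly in $k$, together with Siegel--Walfisz-type estimates in the short ranges where the modifications are significant. A secondary difficulty is the bookkeeping for the primes dividing $k$; these are ``wasted'' by the sieve, and the quantitative restriction on the number of distinct prime factors in the hypothesis is calibrated precisely to keep these losses negligible relative to the gain from the enlarged target. Once the sieve leaves only a small number of survivors in $I$, they are eliminated by a greedy argument with primes slightly larger than $y$, mirroring the corresponding step in Ford--Green--Konyagin--Maynard--Tao.
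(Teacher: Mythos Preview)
Your overall framework is correct and matches the paper: one reduces (via Pomerance's lemma) to covering a short interval $I$ of length about $\log k \log_2 k \log_4 k/\log_3 k$ by residue classes $a_p \pmod p$ for primes $p \le x := (1-\epsilon)\log k$ with $p\nmid k$, and the Chinese Remainder Theorem then supplies $t$. You are also right that the essential difficulty is that primes $p\mid k$ are unavailable for sieving and that the Ford--Green--Konyagin--Maynard--Tao machinery must be adapted.

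However, you misidentify \emph{where} the Maynard--Tao sieve is applied, and this is a genuine gap. You write that the target is ``primes of the form $km+t$'', sparser by a factor $\phi(k)$, and that one needs Bombieri--Vinogradov uniformly in $k$. This is not what happens. After the initial stage (choosing $a_p\equiv 0$ for small $p\nmid k$), the survivors are simply those $m$ in the interval $(x,y]$ having no small prime factor coprime to $k$; the arithmetic progression $t\pmod k$ plays no further role. These survivors are therefore integers of size about $\log k$, not $k$, and they consist of primes together with integers of the form $m'\cdot q$ where $q$ is prime and every prime factor of $m'$ divides $k$ and is $\le \log x$. It is \emph{this} set --- call it $\mathcal{MQ}$ --- that the Maynard--Tao weights must detect, and the modification is to adjoin $M=\prod_{p\le \log x,\, p\mid k}p$ to the parameter $B$ so that the weights no longer penalise divisibility by these small primes of $k$. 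The ensuing sieve calculation lives entirely at scale $\log k$: one needs Bombieri--Vinogradov only for moduli up to $(\log k)^{\theta}$, with no uniformity in $k$ whatsoever.

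Your proposed route --- detecting primes or almost-primes among the integers $km+t$ of size $\approx k$ --- would indeed require equidistribution in progressions to modulus $k$ at level a power of $k$, which is out of reach; and in any case those survivors are $y$-rough numbers, not primes, so the Maynard--Tao weights would not isolate them. The ``small multiples of primes'' in the paper's new idea are small multiples inside $(x,y]$, arising precisely because the sieve could not use the small primes dividing $k$, and not almost-primes in the progression $t\pmod k$.
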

   
We remark that the hypothesis in \eqref{Pom thm} on the number of prime factors of $k$ may be relaxed slightly, as in Theorem \ref{thm}. 

Let $z(k) =\exp((\frac{1}{2}-\epsilon)\log_2k\log _4 k/\log_3  k)$. We note that the set of $k$ which satisfy the hypothesis of Theorem \ref{thm} has density one in the natural numbers. Indeed, by an elementary bound for the sum of the divisor function, we have
\begin{align*}
\frac{1}{N} \sum_{k \leq N} 1_{\omega(k) \geq z(k)}(k) &\leq O(1/\sqrt{N}) +2^{-z(\sqrt{N})}\frac{1}{N} \sum_{\sqrt{N} < k \leq N} d(k) \ll_A \frac{1}{\log^A(N)},
\end{align*} 
for any $A>0$. Note that most $k$ have about $\log_2 k $ distinct prime factors, which is much smaller than $z(k)$.

Our main new ingredient in the proof of Theorem \ref{thm} is the use of the prime-detecting sieves of Maynard-Tao, first introduced in \cite{Ma}. Our proof of Theorem \ref{thm} follows the work of Ford, Green, Konyagin, Maynard, and Tao \cite{Fo} on large gaps between primes. Their strategy relies on sieving an interval with residue classes $a_p \pmod{p}$, building on the method of Westzynthius \cite{Wes}, as modified by Erd\H{o}s \cite{Erd} and Rankin \cite{Ran}. 

After some preliminary work, the authors of \cite{Fo} use the Maynard-Tao sieve weights to find residue classes that cover many primes simultaneously. Our approach is the same, but a complication arises in that we may only sieve with primes that are coprime to $k$. At a crucial part of the argument, we use each residue class to sieve primes and small multiples of primes, as opposed to only primes as in \cite{Fo}. We accomplish this by modifying the Maynard-Tao weights from something like $$\left(\sum_{d_i | n+ h_i} \lambda_{d_1 , \ldots , d_r}\right)^2,$$ to $$\left(\sum_{\substack{d_i | n+ h_i \\ (d_i , M) = 1}} \lambda_{d_1 , \ldots , d_r}\right)^2,$$ where $M$ is a product of very small prime divisors of $k$.

\section{Heuristics supported by data for $k \leq 10^6$}\label{Heuristic section}
   
In this section we develop a heuristic that suggests
\begin{align*}
\liminf_k \frac{P(k)}{\phi(k) \log^2 k}=1, \ \ \ \ \ \ \ \limsup_k \frac{P(k)}{\phi(k) \log^2 k}=2.
\end{align*}
We interpret the process of finding a prime in residue classes as a variant of the coupon collector problem, where the coupons are the residue classes coprime to $k$ and we collect a coupon as soon as we find a prime in that residue class. The heuristic is based on standard results from the theory of probability. We also remark that the authors of \cite{Gra} conjecture that $P(k) \gg \phi(k) \log^2 k$ for all $k$.

For a fixed $k \in \mathbb{N}$, let $m_k$ be a parameter to be chosen later. Let $p_{n}$ denote the $n^{th}$ prime and $\{a_1 , \ldots , a_{\phi(k)}\}$ be the full set of reduced residue classes modulo $k$. For $1 \leq j \leq \phi(k)$, define $E_{j}$ to be the event that $p_1 , \ldots , p_{m_k} \not\equiv a_j \pmod{k}$. The $E$ in $E_j$ can be thought of as being shorthand for ``empty," i.e. the set of the first $m_k$ primes equivalent to  $a_j$ modulo $k$ is the empty set.
Set \begin{align*}A_k := E_{1} \cup \cdots \cup E_{{\phi(k)}}.\end{align*} Thus $A_k$ represents the event that $P(k) > p_{m_k}$. Our heuristic relies on the following three assumptions. We assume that the residue classes $p_1 , \ldots , p_n$ are fixed and describe the distribution of the residue class for $p_{n+1}$:

\begin{itemize}
\item[(i)] For any $i \leq n$ such that $p_{n+1} - p_i < k$, we require that $p_{n+1}$ is in a different residue class than $p_i$ modulo $k$,
\item[(ii)] The residue class for $p_{n+1}$ is distributed uniformly from the remaining residue classes; the ones not eliminated in part (i),
\item[(iii)] The events $A_k$ are pairwise independent for all prime $k$.
\end{itemize}

Condition (i) is meant to model the basic fact that two primes that are close to each other must lie in distinct residue classes. We remark here that if we simply assumed that the residue classes modulo $k$ for each prime were independent and uniform, Lemma \ref{heur} below would remain unchanged. 

Thus, assumptions (i) and (ii) imply that the probability space is $$\Omega_k = \{(x_1,x_2,\dots,x_{m_k}) \in \{1, \ldots , \phi(k)\}^{m_k} : x_i\not=x_j \text{ if } |p_i-p_j| < k\},$$ equipped with the uniform probability measure. To understand $\liminf_k P(k)$ and $\limsup_k P(k)$ we consider $\prod_{k=1}^{\infty} \Omega_k$ equipped with the probability measure guaranteed by Kolmogorov's extension theorem (see Theorem 2.1.14 of \cite{Du}). We remark that some care must be taken with assumption (iii). For instance, it is not reasonable to assume that $A_k$ and $A_{2k}$ are independent.

We set $\pi_t :=|\{j < t : p_t-p_j<k\}|$. We compute the following probabilities exactly using conditional probability, induction, and, most importantly, assumptions (i) and (ii):
\begin{align*}
\mathbb{P}(E_i) &=\prod_{t\leq m_k} \left(1-\frac{1}{\phi(k)-\pi_t}\right), \\
\mathbb{P}(E_i\cap E_j) &=\prod_{t\leq m_k} \left(1-\frac{2}{\phi(k)-\pi_t}\right) \ \ \ (i \neq j).
\end{align*}
Note that the Brun-Titchmarsh inequality implies $\pi_t\ll \frac{k}{\log k}$, while $\phi(k)\gg\frac{k}{\log\log k}$, and so 
\begin{align*}
\mathbb{P}(E_i)&=\exp\left(-\frac{m_k}{\phi(k)}(1+o(1))\right), \\
\mathbb{P}(E_i\cap E_j)&=\exp\left(-\frac{m_k}{\phi(k)}(2+o(1))\right) \ \ \ (i \neq j).
\end{align*}
We remark that the same estimates would hold if we had assumed that the residue classes of the primes were independent and uniformly distributed. 
\begin{lemma}[Probabilistic heuristic]\label{heur} Fix $0 < \epsilon < 1/2$ and assume (i), (ii) and (iii) above. Then 

\begin{align*}\mathbb{P}(P(k) \geq p_{m_k} {\rm \ occurs \ infinitely \ often}) = 0\ \  {\rm if } \ m_k = \lceil (2+\epsilon)\phi(k)\log \phi(k) \rceil, \\ 
\mathbb{P}(P(k) \geq p_{m_k} {\rm \ occurs \ infinitely \ often})= 1 \ \ {\rm if } \ m_k = \lfloor(2-\epsilon)\phi(k)\log \phi(k) \rfloor, \\ 
\mathbb{P}(P(k) \leq p_{m_k} {\rm \ occurs \ infinitely \ often})= 1 \ \ {\rm if } \ m_k = \lceil (1+\epsilon)\phi(k)\log \phi(k) \rceil, \\ 
\mathbb{P}(P(k) \leq p_{m_k} {\rm \ occurs \ infinitely \ often}) = 0 \ \ {\rm if } \ m_k = \lfloor (1-\epsilon)\phi(k)\log \phi(k) \rfloor. \\\end{align*}
\end{lemma}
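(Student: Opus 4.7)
The plan is to apply the Borel--Cantelli lemmas to the sequences of events $(A_k)$ and $(A_k^c)$. The two ``probability zero'' statements (parts 1 and 4) should follow from the first Borel--Cantelli lemma once we show $\sum_k \mathbb{P}(A_k) < \infty$ and $\sum_k \mathbb{P}(A_k^c) < \infty$ respectively; this part of the argument needs no independence. The two ``probability one'' statements (parts 2 and 3) should follow from the pairwise-independent version of the second Borel--Cantelli lemma, applied to the subfamily indexed by primes, where assumption (iii) supplies the necessary independence.

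For parts 1 and 2 the probability estimates recorded immediately before the lemma already do the work. The union bound gives
\[ \mathbb{P}(A_k) \leq \phi(k)\mathbb{P}(E_1) = \phi(k)^{-(1+\epsilon)(1+o(1))}, \]
and since $\phi(k) \gg k/\log_2 k$ the series $\sum_k \mathbb{P}(A_k)$ converges, settling part 1. For part 2 the Bonferroni inequality yields
\[ \mathbb{P}(A_k) \geq \phi(k)\mathbb{P}(E_1) - \binom{\phi(k)}{2}\mathbb{P}(E_1\cap E_2) = \phi(k)^{-(1-\epsilon)(1+o(1))}, \]
so $\sum_p \mathbb{P}(A_p) \gg \sum_p p^{-(1-\epsilon)+o(1)} = \infty$, and the pairwise-independent Borel--Cantelli lemma finishes the job along primes, hence for all $k$. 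Part 3 mirrors part 2: $\mathbb{P}(A_k^c) \geq 1 - \phi(k)^{-\epsilon+o(1)} \to 1$, the sum along primes diverges, and pairwise independence of the $A_p$ descends to pairwise independence of the $A_p^c$.

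The main obstacle is part 4, where we require summability of $\mathbb{P}(A_k^c)$ over all (not just prime) $k$. Setting $N_k := \sum_{i=1}^{\phi(k)} \mathbf{1}_{E_i}$ for the number of empty residue classes, we have $A_k^c = \{N_k = 0\}$ and $\mu_k := \mathbb{E}[N_k] = \phi(k)^{\epsilon(1+o(1))} \to \infty$. A naive Chebyshev bound gives only $\mathbb{P}(N_k = 0) \leq 1/\mu_k = \phi(k)^{-\epsilon + o(1)}$, which is not summable in $k$. To strengthen this I would extend the pairwise computation to obtain
\[ \mathbb{P}(E_{i_1}\cap\cdots\cap E_{i_r}) = \prod_{t\leq m_k}\left(1 - \frac{r}{\phi(k)-\pi_t}\right) = \exp\left(-\frac{r m_k}{\phi(k)}(1+o(1))\right), \]
which matches the $r$-wise distribution of $\phi(k)$ independent Bernoullis of parameter $\mathbb{P}(E_1)$, so the factorial moments of $N_k$ agree asymptotically with those of a Poisson random variable of mean $\mu_k$. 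A Bonferroni sieve taken to order of size $\mu_k$ (or, since $\mathbb{P}(E_i\cap E_j) \leq \mathbb{P}(E_i)\mathbb{P}(E_j)$ exhibits the $E_i$ as negatively correlated, a Janson-type inequality) should then yield $\mathbb{P}(N_k = 0) \leq \exp(-(1+o(1))\mu_k) = \exp(-\phi(k)^{\epsilon + o(1)})$, decaying faster than any polynomial in $k$ and therefore summable. The first Borel--Cantelli lemma then completes part 4. The hardest technical point will be establishing this Poisson-regime concentration rigorously from only the $r$-wise probability formulas.
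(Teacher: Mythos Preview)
Your approach to parts 1--3 coincides with the paper's: union bound and first Borel--Cantelli for part 1, Bonferroni lower bound and second (pairwise-independent) Borel--Cantelli along primes for part 2, and the same for part 3 via $\mathbb{P}(A_k^c)\to 1$.

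For part 4 the paper takes a shorter route than your factorial-moment proposal. Rather than approximate $N_k$ by a Poisson variable, the paper proves directly that the complements $E_i^c$ are negatively correlated in the strong sense
\[
\mathbb{P}\bigl(E_1^c\cap\cdots\cap E_{\phi(k)}^c\bigr)\ \le\ \prod_{i=1}^{\phi(k)}\mathbb{P}(E_i^c),
\]
via an inductive conditioning argument: one shows $\mathbb{P}(E_1^c\cap\cdots\cap E_t^c\mid E_{t+1})\ge \mathbb{P}(E_1^c\cap\cdots\cap E_t^c)$ by noting that conditioning on $E_{t+1}$ (no prime lands in class $a_{t+1}$) can only help fill the remaining classes, and then averaging over the possible contents of class $a_{t+1}$. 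Once this product bound is in hand, the right side is $(1-\mathbb{P}(E_1))^{\phi(k)}\le\exp(-\phi(k)^{\epsilon(1+o(1))})\ll k^{-2}$, and first Borel--Cantelli finishes immediately.

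Your Bonferroni-to-order-$\mu_k$ plan would succeed in principle but requires carrying $o(1)$ error terms uniformly through $\sim\mu_k$ terms of the sieve, which is exactly the technical burden you flag. Your parenthetical Janson remark is in the right spirit but incomplete as stated: the pairwise inequality $\mathbb{P}(E_i\cap E_j)\le\mathbb{P}(E_i)\mathbb{P}(E_j)$ alone does not deliver the full product bound for all $\phi(k)$ complements; one needs the full negative-association statement, and that is precisely what the paper establishes by the conditioning argument above rather than by invoking an off-the-shelf inequality.
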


\begin{proof}
We will use the first and second Borel-Cantelli lemmas, which can be found in any graduate text in probability (for instance, section 2.3 of \cite{Du}).

    By the first Bonferroni inequality, we have 
    $$\mathbb{P}(A_k) \leq \sum_{j=1}^{\phi(k)} \mathbb{P}(E_{j}) = \phi(k)\prod_{t\leq m_k} \left(1-\frac{1}{\phi(k)-\pi_t}\right).$$ 
    If $m_k =\lceil(2+\epsilon)\phi(k)\log \phi(k)\rceil$ then we have
    $$\sum_{k=1}^\infty\mathbb{P}(A_k) \leq \sum_{k=1}^\infty\phi(k)\exp\left(-\frac{m_k}{\phi(k)}(1+o(1))\right)\leq \sum_{k=1}^\infty\frac{1}{\phi(k)^{(1+\epsilon)(1 + o(1))}}<\infty.$$
    The second Bonferroni inequality implies 
$$\mathbb{P}(A_k) \geq \phi(k)\prod_{t\leq m_k} \left(1-\frac{1}{\phi(k)-\pi_t}\right) - {\phi(k) \choose 2}\prod_{t\leq m_k} \left(1-\frac{2}{\phi(k)-\pi_t}\right).$$
If $m_k=\lfloor(2-\epsilon)\phi(k)\log\phi(k)\rfloor$ we obtain 
\begin{align*}
\sum_{k \text{ prime}}\mathbb{P}(A_k) &\geq \sum_{k \text{ prime}} \left[\phi(k)\exp\left(-\frac{m_k}{\phi(k)}(1+o(1))\right)-{\phi(k)\choose 2}\exp\left(-\frac{m_k}{\phi(k)}(2+o(1))\right) \right]\\
&\geq \sum_{k \ \text{prime}}\left( \frac{1}{\phi(k)^{(1-\epsilon)(1 + o(1))}}-\frac{1}{\phi(k)^{(2-2\epsilon)(1+o(1))}} \right) =\infty.
\end{align*}

In conclusion, we have
    \begin{align*}
    \left\{\begin{array}{lr}
    	 \sum_{k=1}^\infty  \ \  \ \mathbb{P}(A_k)<\infty & \text{if } m_k =\lceil(2+\epsilon)\phi(k)\log \phi(k)\rceil,\\
    	\sum_{k {\rm \ prime}}\mathbb{P}(A_k)=\infty & \text{if } m_k =\lfloor (2-\epsilon)\phi(k)\log \phi(k)\rfloor.
    \end{array}\right.
    \end{align*}
 When $m_k=\lceil(2+\epsilon)\phi(k)\log \phi(k)\rceil$, the first Borel-Cantelli lemma implies that $A_k$ occurs infinitely often with probability 0, giving the first claim. When $m_k=\lfloor (2-\epsilon)\phi(k)\log \phi(k)\rfloor$, the second Borel-Cantelli lemma, along with assumption (iii), establishes the second claim.

We now assume $m_k =\lceil (1+\epsilon)\phi(k) \log \phi(k)\rceil $. Note that the event $P(k) \leq p_{m_k}$ is precisely $A_k^c$ and $$\mathbb{P}(A_k^c) \geq 1 - \phi(k)\prod_{t\leq m_k} \left(1-\frac{1}{\phi(k)-\pi_t}\right) \geq 1 - 1/\phi(k)^{\epsilon(1 + o(1))} = 1-  o(1).$$ Now the third claim follows from the second Borel-Cantelli lemma along with assumption (iii).

It remains to show the fourth claim. Assume $m_k =\lfloor (1- \epsilon) \phi(k) \log \phi(k)\rfloor$. Inclusion-exclusion is no longer useful as the first few summands are too large. The new idea is to show that the events $E_{1}^c , \ldots , E_{{\phi(k)}}^c$ are negatively correlated, that is \begin{equation}\label{corr} \mathbb{P}(E_{1}^c \cap \ldots \cap E_{{\phi(k)}}^c) \leq \mathbb{P}(E_{1}^c) \cdots \mathbb{P}(E_{{\phi(k)}}^c).\end{equation}  Intuitively, if the first few coupons are known to be collected, then it is slightly less likely that the next coupon will also be collected. 
By induction, it is enough to show, for $1 \leq t \leq \phi(k) -1$, that $$\mathbb{P}(E_{1}^c \cap \ldots \cap E_{{t+1}}^c) \leq \mathbb{P}(E_{1}^c \cap \ldots \cap E_{{t}}^c)\mathbb{P}(E_{{t+1}}^c).$$ This is equivalent to  $$\mathbb{P}(E_{1}^c \cap \ldots \cap E_{{t}}^c | E_{{t+1}}^c) \leq \mathbb{P}(E_{1}^c \cap \ldots \cap E_{{t}}^c).$$ Using that, for any nonempty events $C$ and $D$, one has $\mathbb{P}(C | D^c) \leq \mathbb{P}(C)$ if and only if $\mathbb{P}(C | D) \geq \mathbb{P}(C)$, this is equivalent to $$\mathbb{P}(E_{1}^c \cap \ldots \cap E_{{t}}^c | E_{{t+1}}) \geq \mathbb{P}(E_{1}^c \cap \cdots \cap E_{{t}}^c).$$ 

For any $\mathcal{C}\subset \{p_1 , \ldots , p_{m_k}\}$, let $F(\mathcal{C})$ be the event that $\mathcal{C}$ is the set of primes $p \leq p_{m_k}$ congruent to $a_{t+1} \pmod{k}$. Observe that conditioning on $F(\mathcal{C})$ is equivalent to removing one residue class and primes in $\mathcal{C}$  from the probability space. Then $E_{t+1}$ corresponds to the case $\mathcal{C}=\emptyset$. Since $\mathbb{P}(E_{1}^c \cap \ldots \cap E_{t}^c|F(\mathcal{C}))$ is monotone decreasing in $|\mathcal{C}|$, we have $\mathbb{P}(E_{1}^c \cap \ldots \cap E_{t}^c |F(\mathcal{C})) \leq \mathbb{P}(E_{1}^c \cap \ldots \cap E_{t}^c | E_{{t+1}})$. Then \begin{align*} \mathbb{P}(E_{1}^c & \cap \ldots \cap E_{t}^c) = \sum_{\mathcal{C} \subset \{p_1 , \ldots , p_{m_k}\}} \mathbb{P}(E_{1}^c \cap \ldots \cap E_{t}^c | F(\mathcal{C})) \mathbb{P}(F(\mathcal{C})) \\
&\leq \mathbb{P}(E_{1}^c \cap \ldots \cap E_{{t}}^c | E_{{t+1}}) \left( \sum_{\mathcal{C} } \mathbb{P}(F(\mathcal{C}))\right) = \mathbb{P}(E_{1}^c \cap \ldots \cap E_{{t}}^c | E_{{t+1}}) . \end{align*} This shows \eqref{corr}. We also have  
\begin{equation}\label{dev} 
\mathbb{P}(E^c_{1}) \cdots \mathbb{P}(E^c_{\phi(k)}) = \left(1 - \prod_{t\leq m_k} \left(1-\frac{1}{\phi(k)-\pi_t}\right) \right)^{\phi(k)} \leq \exp \left(-\phi(k)^{\epsilon(1+o(1))}\right) \ll k^{-2}.
\end{equation} 
The final claim now follows from \eqref{corr} and the first Borel-Cantelli lemma.
\end{proof}

Applying Lemma \ref{heur}, the prime number theorem and the fact that $\log \phi(k) \sim \log k$, we obtain $\liminf_k P(k) / \phi(k) \log^2 k = 1$ with probability 1. In a similar manner, it also follows from Lemma \ref{heur} that $\limsup_k P(k) / \phi(k) \log^2 k = 2$ with probability 1. 

 We remark that Wagstaff \cite{Wag2} provides a heuristic, supported by numerical data, which claims that the typical value of $P(k)$ is $\phi(k) \log^2 k$. Indeed, one could apply a variant of the weak law of large numbers as in Example 2.2.3 in \cite{Du} to get $P(k)/\phi(k) \log^2 k \to 1$ in probability. In Figure \ref{picRatio} we calculate $$P(k)/(\phi(k)\log(\phi(k))\log k),$$ for $k\leq 10^6$. Note that this quantity is very concentrated near $1$. 

We remark that the third and fourth claims of Lemma \ref{heur} are basically a medium deviation result of the coupon collector problem similar in spirit to that of Example 3.6.6 in \cite{Du}. In the notation there, $x$ is required to be fixed, but we require $x$ to be of size $-\epsilon \log \phi(k)$. From this perspective, we understand why the $\exp\left(-\phi(k)^{\epsilon(1 + o(1))}\right)$ appeared in \eqref{dev}. In Figure \ref{Distribution} we show the distribution of the quantity 
$$r_k=\frac{P(k)-\phi(k)\log \phi(k)\log P(k)}{\phi(k)\log{P(k)}},$$ for $k$ up to $10^6$ is approximately  $\mathbb{P}(r_k\leq x) = e^{-e^{c-b x}}$, a Gumbel distribution, where $b\approx 1.45$ and $c\approx -13.6/e$. This should be compared to example 3.6.6 in \cite{Du}. In fact, if $\xi_n$ are independent variables with $\mathbb{P}(\xi_n\pmod k= a_i)= \frac{1}{\phi(k)}$ for all residue classes $a_i$ coprime to $k$, then equation (2) in \cite{ErdosRenyi} implies that the waiting time for each residue class to be filled  

$$w(k):=\min\{n: {\rm there \ exists} \  t\leq n  \ {\rm such \ that } \  \xi_t\equiv a_i, \ {\rm for \ all} \ i=1,\cdots,\phi(k) \},$$ 

has the asymptotic distribution $$\lim_{k\rightarrow\infty} \mathbb{P}\left(\frac{w(k)}{\phi(k)}-\log(\phi(k))<x\right)=e^{-e^{-x}}.$$Due to assumption (i), it is not clear that $r_k$ has a limiting distribution. If $r_k$ does have a Gumbel distribution, it is not clear what the expected parameters should be. 


Our conditions (i), (ii), and (iii) are not the only reasonable simplifying assumptions one could imagine for a probabilistic model of $P(k)$. Nevertheless, the nature of the coupon collector problem is that many coupons are collected quickly and one has to wait a long time to collect the last few coupons (see the calculations in Example 2.2.3 in \cite{Du}). With any set of assumptions, we inevitably arrive at the situation where we are seeking a prime in one of very few residue classes. We are unable to think of any set of assumptions that would allow us to have any control of this part of the process. 
\begin{figure}
\includegraphics[width=\textwidth]{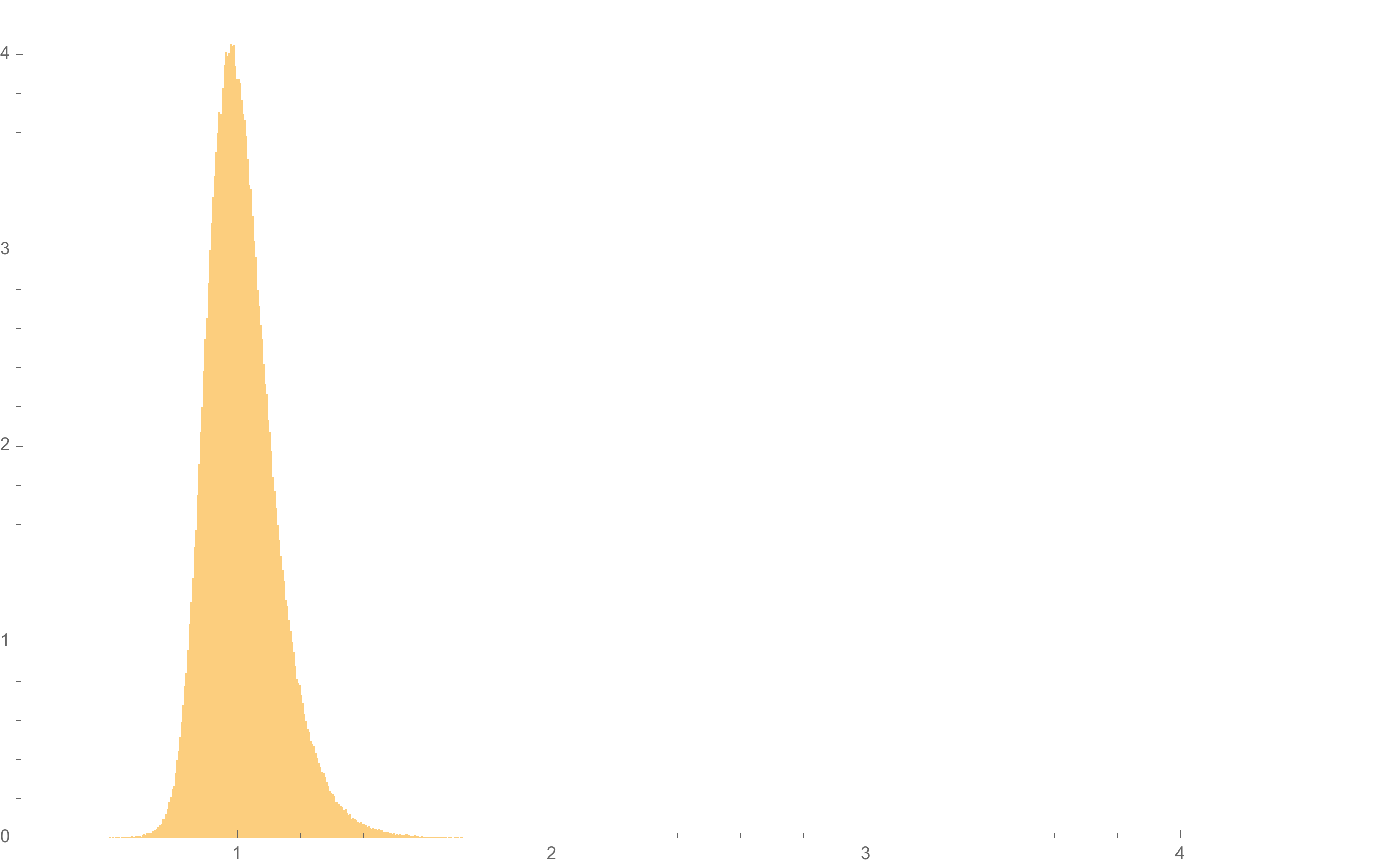}
\caption{Histogram for $P(k)/\phi(k)\log(\phi(k))\log k$ for $k\leq 10^6$}
\label{picRatio}
\end{figure}

\begin{figure}
\includegraphics[width=\textwidth]{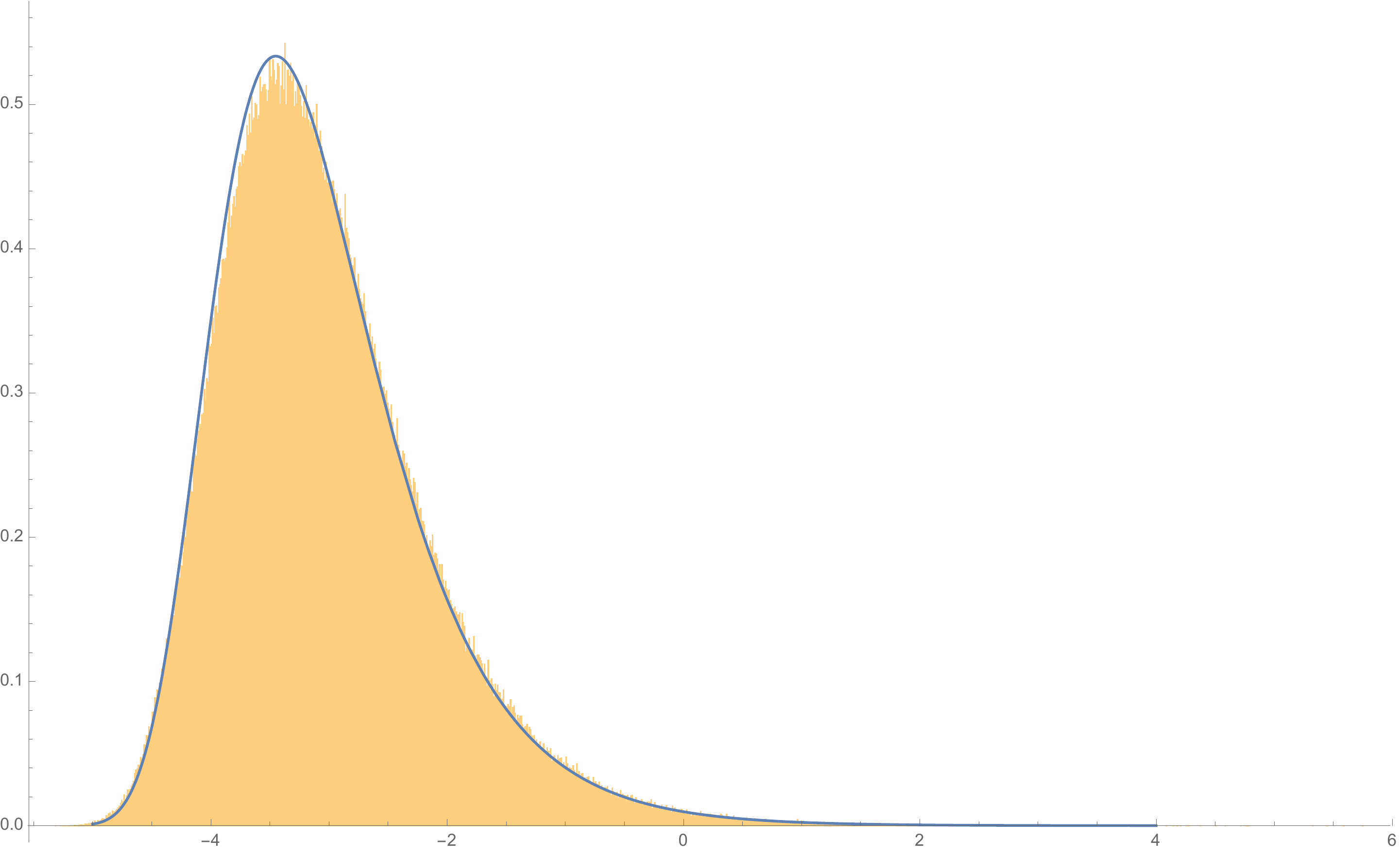}
\caption{Histogram of $r_k=\frac{P(k)-\phi(k)\log \phi(k)\log P(k)}{\phi(k)\log P(k)}$ for $k \leq 10^6$, and the density function of the distribution $e^{-e^{c-bx}}$, with $b\approx 1.45$, $c\approx -13.6/e$ .}
\label{Distribution}
\end{figure}

\section{Notation and Conventions}
For a set of primes $S$ and for each $p \in S$, let $a_p \pmod{p}$ be a residue class. We will denote this sequence of residue classes by $(a_p \pmod{p})_{p \in S}$, or simply by $(a_p)$ or even $\vec{a}$ when the meaning is clear from context.

We will use $\vec{\bold{a}}$ to denote a sequence of residue classes chosen randomly from a probability distribution.

For a positive  integer $n$, we set $P^+(n)$ to be the largest prime factor of $n$ ($P^+(1) := 1$). We also let $\phi(n)$ denote Euler's totient function. 

We say $h_1 , \ldots , h_r \in \mathbb{Z}$ is an admissible $r$-tuple if for every prime $p$ we have $\{h_1 \pmod{p} , \ldots , h_r \pmod{p}\} \neq \mathbb{Z} / p \mathbb{Z}$.
 
Let $\mathcal{L}=\{L_1,\dots,L_k\}$ be a set of distinct linear functions $L_i(n)=a_i n+b_i,1\leq i\leq k$, where $a_i,b_i$ are integers. We say $\mathcal{L}$ is admissible if $\prod_{i=1}^kL_i(n)$ has no fixed prime divisor. That is, for every prime $p$, there is an integer $n_p$ such that $\prod_{i=1}^kL_i(n_p)$ is coprime to $p$. 

We use $X = O(Y)$ to mean there exists a constant $C>0$ such that $|X| \leq CY$ throughout the domain of $X$. We write $X\ll Y$ to mean $X = O(Y)$. If the implied constant may be taken to be one, we write $X = O_\leq (Y)$. The notation $X \asymp Y$ means $X \ll Y$ and $Y \ll X$. 

We write $g(k) = o(f(k))$ if $\frac{g(k)}{f(k)} = o(1)$, and $f(k) \sim g(k)$ if $f(k) = (1+o(1))g(k)$. The notation $o(1)$ denotes a quantity that tends to zero as $k$ goes to infinity. 

From now on all implied constants may depend on $\epsilon$, and any other dependence is explicitly noted. 

\section{Outline of the proof of Theorem \ref{thm}}

We give an informal description of the proof of Theorem \ref{thm} before turning to the details in earnest. Nothing in this section will be used in later sections.

We use a theorem of Pomerance \cite{Po} to reduce to showing there exist residue classes of primes $\leq (1 - o(1)) \log k$ that cover an interval of length $y$. Here $y$ is much larger than $\log k$. The caveat is that we are not allowed to use primes which divide $k$. We choose many of these residue classes to be $\equiv 0 \pmod{p}$. We crucially use smooth number estimates to show that what remains after this first step is substantially smaller than what naive heuristics (or a sieve) would predict. It is in this step that our assumption on the number of prime divisors of $k$ is most important. The main difference from the arguments of \cite{Fo} already appears in this step, as we are left with both primes and small multiples of primes. 

Next, we choose many of the residue classes of the medium-sized primes uniformly at random. It is important that these primes are not too small, in order to show what remains after this step has nice distributional properties (see Lemma \ref{st}).

In the third step, we condition on the random residue classes chosen in the previous step and choose the residue classes for large primes $\asymp \log k$. This step is also random, but we use a modified version of the Maynard-Tao weights to create our probability distribution. What remains from the previous two steps is a sparse subset of an interval of length $y$. In general, one cannot hope to cover such a set without additional information. For instance, if what remained consisted of $L$ consecutive integers, we could only hope to cover $\asymp L/\log k$ integers with each prime. 

We use the fact that what remains after step one is typically covered by our modification of the Maynard-Tao weights and that, with high probability, what remains after step two interacts well with the Maynard-Tao weights. This would already give an improvement to Theorem 3 of \cite{Po}, but we seek to optimize our argument by utilizing a hypergraph covering lemma from \cite{Fo}. This ensures that the residue classes from this third step cover what remains almost disjointly. 


In the final step, what remains is so small that we use our leftover primes, saved just for this purpose, to cover unsieved elements one at a time.

\section{First Steps towards Theorem \ref{thm}}

Our proof of Theorem \ref{thm} begins with a result due to Pomerance \cite{Po}. For $m \in \mathbb{N}$ we define Jacobsthal's function $g(m)$ to be the largest difference between consecutive integers coprime to $m$. Thus for instance, there exist $g(m)-1$ consecutive integers all of which have a prime factor in common with $m$. 
\begin{lemma}\label{gm}
Suppose $k,m$ are integers, with $0<m\leq \frac{k}{1+g(k)}$ and $(m,k)=1$. Then $$P(k)>(g(m)-1)k.$$
\end{lemma}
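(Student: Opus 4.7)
The plan is to exhibit a single residue class $\ell \pmod k$ with $(\ell, k) = 1$ and $1 \leq \ell < k$ such that every integer $\ell + jk$ for $0 \leq j \leq g(m) - 2$ is composite. Since these are precisely the integers in the class $\ell \pmod k$ lying in $[1, (g(m)-1)k]$, this forces $p(k, \ell) > (g(m)-1)k$ and hence $P(k) > (g(m)-1)k$.

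First, I would use the definition of $g(m)$ to fix an integer $a$ with $(a+i, m) > 1$ for every $1 \leq i \leq g(m)-1$. Set $M = \prod_{p \mid m} p$; since $(m, k) = 1$ we also have $(M, k) = 1$. Via the Chinese Remainder Theorem, I would impose the congruence $\ell \equiv (a+1)k \pmod M$, so that for each $0 \leq j \leq g(m)-2$ and each prime $p$ dividing $\gcd(a+1+j, m)$, the identity $\ell + jk \equiv (a+1+j)k \equiv 0 \pmod p$ gives $p \mid \ell + jk$. Such a $p$ exists by the choice of $a$, and for $j \geq 1$ one has $\ell + jk \geq k > m \geq p$, so $\ell + jk$ is composite.

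The third step is to enforce $(\ell, k) = 1$ and $\ell \in [1, k-1]$. The constraint $\ell \equiv r \pmod M$ with $r := (a+1)k \bmod M$ places $\ell$ in an arithmetic progression of common difference $M$ inside $[0, k)$. The hypothesis $M \leq m \leq k/(1+g(k))$ ensures that this progression contains at least $1 + g(k)$ terms; after scaling by $M^{-1} \pmod k$ they form $\geq 1 + g(k)$ consecutive residues modulo $k$, so by the definition of $g(k)$ at least one of them is coprime to $k$, giving the desired $\ell$.

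The main obstacle I anticipate is the $j = 0$ case: to avoid $\ell$ itself being a small prime sitting in its own residue class, I must also ensure $\ell$ is composite (or equal to $1$). The danger is $\ell = p$ for some $p \mid m$, which forces $p \mid r$ and hence $p \mid a+1$, producing at most $\omega(m)$ forbidden candidates among the $\geq 1 + g(k)$ total. I would remove this obstruction by restricting to candidates with $\ell \geq M$---any prime factor $p$ of $m$ dividing such an $\ell$ then satisfies $p < \ell$, making $\ell$ composite---and by a tight counting argument verify that the slack in Jacobsthal's inequality for $k$ still leaves at least one admissible choice.
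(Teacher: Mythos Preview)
The paper does not give its own proof of this lemma; it simply cites Theorem~1 of Pomerance \cite{Po}. Your argument is essentially a reconstruction of Pomerance's proof, and the strategy---fix a maximal run $a+1,\dots,a+g(m)-1$ of integers not coprime to $m$, impose $\ell\equiv (a+1)k\pmod{M}$ so that each $\ell+jk$ inherits a small prime factor from $m$, and then use the bound $m\le k/(1+g(k))$ together with the definition of $g(k)$ to locate an $\ell$ coprime to $k$---is exactly right. Your counting in step~3 is correct: there are at least $1+g(k)$ terms of the progression in $[0,k)$, and multiplication by $M^{-1}\pmod{k}$ turns them into consecutive residues, among which one must be coprime to $k$.

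There is one small slip in the final paragraph. You assert that restricting to $\ell\ge M$ forces any prime $p\mid m$ dividing $\ell$ to satisfy $p<\ell$. This can fail in the single case $\ell=M$ with $M$ itself prime (equivalently $m$ a prime power and $r=0$); then $\ell=M=p$ is prime. The cleanest patch is to note that when $m$ is a prime power one has $g(m)=2$, so the claim reduces to $P(k)>k$, which is immediate since $p(k,1)\ge k+1$. Alternatively, restrict instead to $\ell>m$: because $m\le k/(1+g(k))$ one has $k-m\ge g(k)\,m\ge g(k)\,M$, so there remain at least $g(k)$ terms of the progression in $(m,k)$, still enough (after scaling) to guarantee one coprime to $k$; and now $\ell>m\ge p$ gives compositeness with no edge cases. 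With either fix your proof is complete.
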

\begin{proof}
This is Theorem 1 of \cite{Po}.
\end{proof} Fix $\epsilon>0$, and let $$x:=(1-\epsilon)\log k.$$ We apply this lemma with 
$$m=\prod_{\substack{p \leq x \\ p \nmid k}}p.$$ 
By the prime number theorem we have $m<k^{1-\epsilon/2}$ for all large $k$. A simple sieve argument shows $g(m) \ll (\log m)^{O(1)}$, and Iwaniec \cite{Iw} showed $g(m)\ll \log^2m$. Thus, the hypotheses of Lemma \ref{gm} are satisfied for our choice of $m$ when $k$ is sufficiently large. The proof of Theorem \ref{thm} is then reduced to proving
\begin{align}\label{Lower bound for gm}
g(m) \gg \frac{\phi(k)}{k} \log k \log_2 k\frac{\log_4 k}{\log_3 k}.
\end{align}

\section{Random Construction}
Our proof of Theorem \ref{thm} closely follows the arguments of \cite{Fo}. The arguments in this section correspond to Section 4 of \cite{Fo}.

Let $y: = c_k x\log x \log_3 x/\log_2 x$,  where $c_k \in (0,1]$ is a parameter, defined below in \eqref{ck}, that satisfies $c_k \gg  \frac{\phi(k)}{k}$. Our goal is to find residue classes $(a_p \pmod{p})_{p\mid m}$ such that for every integer $n\in(x,y]$, we have $n\equiv a_p\pmod{p}$ for some $p$ dividing $m$. By the Chinese remainder theorem there exists $t \in \mathbb{N}$ such that $t\equiv -a_p\pmod{p}$ for all $p$ dividing $m$. Thus, for every $n \in (x,y]$ there exists a prime $p$ dividing $m$ such that $t + n \equiv -a_p + a_p \equiv 0 \pmod{p}$, which shows that $g(m)\geq y-x$.

Let $$z:=\exp\left({\left(\frac{1-\epsilon}{2}\right) \frac{\log x\log_3 x}{\log _2 x}}\right),$$ and consider the disjoint sets of primes
	\begin{align*}
	\mathcal{S}&:=\{s\mbox{ prime: }\log^{20}x< s \leq z, \ s\nmid k\},\\
    \mathcal{P}&:=\{p\mbox{ prime: } x/2< p \leq x, \ p\nmid k \}.
	\end{align*}
 
We choose the residue classes $(a_p \pmod{p})_{p\mid m} = (a_p)_{p|m}$ in four stages.
\begin{enumerate}
	\item[Stage 1.] Choose $a_p \equiv 0 \pmod{p}$ for the primes $p\leq \log^{20}x$ and $p\in(z,x/4]$;
	\item[Stage 2.] For each prime $s\in \mathcal{S}$, select each $\bold{a}_s\pmod{s}$ independently and uniformly at random. Let $\vec{\bold{a}}:=(\bold{a}_s\pmod{s})_{s\in\mathcal{S}}$;
	\item[Stage 3.] For each prime $p\in \mathcal{P}$, select a residue class $b_p \pmod{p}$ strategically depending on $\vec{\bold{a}} $;
	\item[Stage 4.] Select residue classes for primes in $(x/4,x/2]$ to cover the elements of $(x,y]$ left uncovered by earlier stages, matching each uncovered element with a prime and choosing residue classes accordingly.
\end{enumerate}
Hence, to prove \eqref{Lower bound for gm} it is sufficient to show that the number of elements left uncovered after the first three stages is less than $ \pi(x/2)-\pi(x/4) = (1+o(1)) \frac{x}{4\log x }$.

 After stage 1, what remains uncovered in $(x,y]$ falls into one of the following three sets. 
\begin{itemize}
	\item $\mathcal{ZS}:=\{n:  P^+(n)\leq z\}$,
	\item $\mathcal{ZR}:=\{n : \text{ there exists } p \mid n  \mbox{ such that } p\mid k \text{ and }z<p<x/4\}$,
	\item $\mathcal{MQ}:=\{n=mq : p\mid m \text{ implies } p\mid k\text{ and }p\leq 4y/x; q \mbox{ is a prime in } (x/4,y]\ \}$.
\end{itemize}
	\begin{center}	
\begin{tikzpicture}
\draw[->] (0,0)--node[above]{$0 \pmod {p}$}(1,0) -- (2,0) -- node[below] {$\log^{20}x$} ++(1,0) -- node[above]{$\mathcal{S}$}++(1,0) -- node[below] {$z$}++(2,0)--node[above]{$0 \pmod {p}$}++(1,0)-- node[below]{$x/4$} ++(2,0)-- node[above]{Stage 4}++(1,0)--node[below]{$x/2$} ++(1,0) -- node[above]{$\mathcal{P}$}++(1,0) --  node[below]{$x$}++(1,0) --node[above]{$\mathcal{MQ}$}++(1,0)  -- node[below]{$y$}  ++(1,0);
\end{tikzpicture}
	\end{center}

We show that $\#\mathcal{ZS}$ and $\#\mathcal{ZR}$ are small enough to be easily covered in stage 4. Rankin's method \cite{Ran} for estimating smooth numbers, which can be found in \cite{Notes1}, for instance, gives 
\begin{align*}
\#\mathcal{ZS}\leq y e^{-(1+o(1))\frac{\log y}{\log z}\log(\frac{\log y}{\log z}) }
= \frac{y}{\log^{2/(1-\epsilon)+o(1)} x }=o\left(\frac{x}{\log x}\right).
\end{align*} 
The assumption that $\omega(k) \leq \exp((\frac{1}{2}-\epsilon)\log _2k\log_4k/\log_3 k)$ implies
\begin{align*}
\#\mathcal{ZR}&\leq \sum_{p>z,p\mid k}\frac{y}{p} \leq \frac{y}{z}\exp\left(\left(\frac{1}{2}-\epsilon\right)\log_2 k\frac{\log _4 k}{\log_3 k}\right)\\
&\leq y\exp\left(-\frac{\epsilon}{3}\frac{\log _3 x}{\log_2 x}\log x\right) = o\left(\frac{x}{\log x}\right).
\end{align*} 
 
For residue classes $\vec{a}=(a_s\pmod{s})_{s\in\mathcal{S}}$ and $\vec{b}=(b_p\pmod{p})_{p\in\mathcal{P}}$, define the sifted sets
\begin{align*}	
S(\vec{a}):&=\{n\in\mathbb{Z}:\ n\not\equiv a_s\pmod{s}, \mbox{ for all $s\in \mathcal{S}$}\},\\
T(\vec{b}):&=\{n\in\mathbb{Z}:\ n\not\equiv b_p\pmod{p}, \mbox{ for all $p\in \mathcal{P}$}\}.
\end{align*}
Thus, it is enough to show there exist $\vec{a}$ and $\vec{b}$ such that
\begin{equation}\label{rdc}
\#(\mathcal{MQ}\cap S(\vec{a}) \cap T(\vec{b}))\leq \frac{x}{6\log x}.
\end{equation}
Define 
\begin{align*}
M:=\prod_{\substack{p\leq \log x \\ p\mid k}}p, \ \ \ \ \ \ \ \  \ \ \ \kappa:=\prod_{\substack{\log^{20}x<p\leq z\\p\mid k}}p.
\end{align*}
We note here that it is important for the definition of $M$ that $y/x\ll \log x$. Set
\begin{equation}\label{ck} c_k : = c \frac{\phi(M)}{M} \frac{\phi(\kappa)}{\kappa}, 
\end{equation} 
where $c > 0$ is some small fixed constant. Let 
\begin{align}\label{sigma}
\sigma:=\prod_{s\in\mathcal{S}}\left(1-\frac{1}{s}\right)\sim \frac{40}{1-\epsilon}\frac{\kappa}{\phi(\kappa)}\frac{(\log_2x)^2}{\log x\log_3 x}.
\end{align}


In the second stage $\vec{\bold{a}}$ is chosen randomly, and with probability $1-o(1)$ the set $\mathcal{MQ}\cap S(\vec{\bold{a}})$ has the expected size $\sigma\#\mathcal{MQ}$. We then want to use each residue class $b_p \pmod{p}$ to cover many elements of $\mathcal{MQ}\cap S(\vec{\bold{a}})$.

For the method to work, it is crucial that we choose $(b_p)$ depending on $(a_s)$, since we want $(b_p)$ to sieve out elements of $\mathcal{MQ}$ left uncovered by $(a_s)$. The next lemma is the main tool that eventually allows us to do this. 
 
\begin{lemma}\label{rdm}
	Let $x,y$ be as above. Then there is a quantity $C$ with 
	$$C\asymp \frac{1}{c},$$ with the implied constants independent of $c$, a tuple of positive integers $(h_1,\dots,h_r)$ with $r\leq \sqrt{\log x}$ and some way to choose random vectors $\vec{\bold{a}}=(\bold{a}_s\pmod{s})_{s\in \mathcal{S}}$ and $\vec{\bold{n}}=(\bold{n}_p\pmod{p})_{p\in \mathcal{P}}$ such that there exist $\mathcal{P}(\vec{a})\subset \mathcal{P}$ with $\#\mathcal{P}(\vec{a})=\left(1+O\left(\frac{1}{\log^{3}x}\right)\right)\#\mathcal{P}$. \begin{itemize}
		\item For every $\vec{a}$ and all $p \in \mathcal{P}(\vec{a})$,
		\begin{equation} \label{rdm0}\mathbb{P}(q\in \bold{e}_p(\vec{a})|\vec{\bold{a}}=\vec{a})\leq x^{-1/2-1/10},\end{equation} where $\bold{e}_p(\vec{a}):=\{\bold{n}_p+h_ip:1\leq i\leq r\}\cap\mathcal{MQ}\cap S(\vec{a}).$ 
		\item With probability $1-o(1)$, 
	\begin{equation}\label{rdm1}\#(\mathcal{MQ}\cap S(\vec{\bold{a}})) \leq c\frac{41}{1-\epsilon}\frac{x}{\log x}\log_2x.\end{equation}
		\item Call an element $\vec{a}$ \emph{good} if, for all but at most $O(\frac{1}{\log_2^2 x}\#(\mathcal{MQ}\cap S(\vec{a})))$ elements of $\mathcal{MQ}\cap S(\vec{a})$,
		\begin{equation}\label{rdm2}\sum_{p\in \mathcal{P}(\vec{a})}\mathbb{P}(q\in \bold{e}_p(\vec{a})|\vec{\bold{a}}=\vec{a})=C+O_\leq\left(\frac{1}{\log_2^2x}\right).
		\end{equation}
		Then $\vec{\bold{a}}$ is good with probability $1-o(1)$.
	\end{itemize}
\end{lemma}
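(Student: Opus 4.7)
The plan is to adapt the Maynard--Tao-weighted construction from Section 6 of \cite{Fo}, with two modifications signaled in the introduction: the sieve divisors must be coprime to $M\kappa$, and the weights must detect elements of $\mathcal{MQ}$ (primes times small smooth numbers) rather than primes alone. I would fix an admissible $r$-tuple $(h_1,\ldots,h_r)$ with $r=\lfloor\sqrt{\log x}\rfloor$, contained in a short interval of length $O(r\log r)$ and chosen so that every $h_i$ is coprime to $M\kappa$. For Maynard--Tao coefficients $\lambda_{d_1,\ldots,d_r}$ supported on squarefree tuples with $d_1\cdots d_r \leq R := x^{1/2-\eta}$ and $(d_1\cdots d_r,M\kappa)=1$, set
$$w(n) := \Bigl(\sum_{\substack{d_i\mid n+h_i,\,1\leq i\leq r}} \lambda_{d_1,\ldots,d_r}\Bigr)^2.$$
Stage 2 chooses $\bold{a}_s$ independently and uniformly on $\mathbb{Z}/s\mathbb{Z}$ for $s\in\mathcal{S}$. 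Given $\vec{a}$, the Stage 3 distribution of $\bold{n}_p$ is defined by
$$\mathbb{P}(\bold{n}_p \equiv n_0 \pmod p \mid \vec{\bold{a}}=\vec{a}) \;\propto\; \sum_{\substack{n\equiv n_0\,(p)\\ n\in I}} w(n) \prod_{i=1}^{r}\mathbf{1}_{n+h_ip\in \mathcal{MQ}\cap S(\vec{a})},$$
for a suitable subinterval $I\subset (x,y]$, with normalizing constant $Z_p(\vec{a})$.

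I would then define $\mathcal{P}(\vec{a})$ to consist of those $p\in\mathcal{P}$ for which $Z_p(\vec{a})$ lies within a factor $1+O(\log^{-3}x)$ of its main-term prediction obtained from standard Maynard--Tao asymptotics; a first-moment/Markov argument over $\vec{\bold{a}}$, using the independence of the $\bold{a}_s$, should yield $\#\mathcal{P}(\vec{a})=(1+O(\log^{-3}x))\#\mathcal{P}$ with probability $1-o(1)$. For \eqref{rdm0} on $p\in\mathcal{P}(\vec{a})$, the probability that a fixed $q$ lies in $\bold{e}_p(\vec{a})$ is at most $r\max_n w(n)/Z_p(\vec{a})$; the pointwise bound $w(n)\ll R^{o(1)}=x^{o(1)}$ together with the main-term lower bound $Z_p(\vec{a})\gg x^{1/2+1/10+o(1)}$ then yields the required saving. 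For \eqref{rdm1}, a direct expectation computation gives $\mathbb{E}\,\#(\mathcal{MQ}\cap S(\vec{\bold{a}}))=\sigma\,\#\mathcal{MQ}$, and combining \eqref{sigma}, the estimate $\#\mathcal{MQ}\sim y/\log x$ and Chebyshev delivers the stated upper bound.

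The main obstacle is \eqref{rdm2}. For fixed $q\in\mathcal{MQ}\cap S(\vec{a})$ one has
$$\sum_{p\in\mathcal{P}(\vec{a})}\mathbb{P}(q\in\bold{e}_p(\vec{a})\mid\vec{\bold{a}}=\vec{a}) = \sum_{p\in\mathcal{P}(\vec{a})}\sum_{i=1}^{r}\frac{1}{Z_p(\vec{a})}\sum_{\substack{n\equiv q-h_ip\,(p)\\n\in I}} w(n)\prod_{j}\mathbf{1}_{n+h_jp\in\mathcal{MQ}\cap S(\vec{a})},$$
and both numerator and denominator must be evaluated via Maynard--Tao asymptotic formulas. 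The Euler-product corrections arising from the coprimality constraint with $M\kappa$ cancel precisely against those in $c_k$ (cf.\ \eqref{ck}), producing a constant $C\asymp 1/c$ independent of $c$. Concentration of the left-hand side over $\vec{\bold{a}}$ will require a variance estimate whose cross terms reduce to fourth-moment Maynard--Tao sums; the hypothesis $s>\log^{20}x$ on primes of $\mathcal{S}$ is precisely what makes these cross terms tractable, mirroring the role of the analogous hypothesis in Section 5 of \cite{Fo}. The genuinely new technicality, relative to \cite{Fo}, is that the Maynard--Tao asymptotics must be summed over the smooth multiplier $m$ in $\mathcal{MQ}=\{mq\}$ rather than merely over primes: since $m\leq 4y/x=O(\log x)$, the extra sum contributes only logarithmic factors that are absorbed cleanly by the definitions of $y$ and $c_k$, leaving the clean final constant $C\asymp 1/c$.
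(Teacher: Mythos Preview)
Your proposal contains a genuine gap in the definition of the Stage~3 distribution. You set
$$\mathbb{P}(\bold{n}_p \equiv n_0 \pmod p \mid \vec{\bold{a}}=\vec{a}) \;\propto\; \sum_{\substack{n\equiv n_0\,(p)\\ n\in I}} w(n) \prod_{i=1}^{r}\mathbf{1}_{n+h_ip\in \mathcal{MQ}\cap S(\vec{a})},$$
so that the normalizing constant is $Z_p(\vec{a})=\sum_{n\in I} w(n)\prod_i \mathbf{1}_{n+h_ip\in\mathcal{MQ}\cap S(\vec{a})}$. But imposing the hard constraint that \emph{all} $r$ shifts $n+h_ip$ lie in $\mathcal{MQ}$ is essentially a prime $r$-tuples condition: to give the ``main-term lower bound $Z_p(\vec{a})\gg x^{1/2+1/10+o(1)}$'' you invoke for \eqref{rdm0}, and to show $Z_p(\vec{a})$ concentrates near a predicted value for the definition of $\mathcal{P}(\vec{a})$, you would need asymptotics for $r$ simultaneous almost-primes. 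The Maynard--Tao machinery does not provide this; it gives asymptotics for $\sum_n w(n)$ and for $\sum_n w(n)\mathbf{1}_{L_j(n)\text{ prime}}$ with a \emph{single} form $L_j$ singled out, not for $\sum_n w(n)\prod_j \mathbf{1}_{L_j(n)\text{ prime}}$.

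The paper avoids this by \emph{not} imposing the $\mathcal{MQ}$ condition on the shifts. The distribution of $\bold{n}_p$ is proportional to $w(p,n)\prod_i \mathbf{1}_{n+h_ip\in S(\vec{a})}$ only, so the normalization $X_p(\vec{a})$ is computable via Lemma~\ref{st} and the first Maynard--Tao asymptotic \eqref{nsum}. The $\mathcal{MQ}$ condition enters only through the fixed target $q\in\mathcal{MQ}$ in \eqref{rdm2}: one needs $\sum_{p\in\mathcal{P}} w(p,q-h_ip)$ to be essentially independent of $q\in\mathcal{MQ}$, which is the content of \eqref{psum}. The device that makes \eqref{psum} hold uniformly over $q=m q'$ with small $m\mid M$ is not ``summing over the smooth multiplier $m$'' as you suggest, but rather folding $M$ into the parameter $B=B^*M$ in the weights, so that primes dividing $m$ are excluded from the singular series $\mathfrak{S}(\tilde{\mathcal{L}}_{q,i})$ and Lemma~\ref{singular} goes through. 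Two smaller points: the paper takes $r=\lfloor\log^{1/5}x\rfloor$ and $R\leq x^{\theta/3}$ with $\theta=1/3$, both dictated by the hypotheses of Maynard's Proposition~6.1 in \cite{Ma2}; your choices $r=\lfloor\sqrt{\log x}\rfloor$ and $R=x^{1/2-\eta}$ fall outside that framework.
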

We use the following lemma, which is Corollary 4 of \cite{Fo}, to ensure we may find residue classes $(b_p \pmod{p})$ that sieve almost disjointly.
\begin{lemma}\label{Covering}
	Let $x\rightarrow\infty$. Let $\mathcal{P'}, \mathcal{Q'}$ be sets with $\#\mathcal{P'}\leq x$ and $\#\mathcal{Q'}>(\log_2x)^3$. For each $p\in \mathcal{P'}$, let $\bold{e}_p$ be a random subset of $\mathcal{Q'}$ satisfying the size bound 
	$$\#\bold{e}_p\leq r=O\left(\frac{\log x\log_3x}{\log_2^2x}\right).$$
	Assume the following:
	\begin{itemize}
		\item (Sparsity) For all $p\in \mathcal{P'}$ and $q\in\mathcal{Q'}$, $$\mathbb{P}(q\in\bold{e}_p)\leq x^{-1/2-1/10}.$$
		\item(Uniform covering) For all but at most $\frac{1}{\log_2^2 x}\#\mathcal{Q'}$ elements of $\mathcal{Q}'$, we have $$\sum_{p\in\mathcal{P'}}\mathbb{P}(q\in\bold{e}_p)=C+O_\leq\left(\frac{1}{\log_2^2x}\right),$$ for some quantity $C$ independent of $q$ satisfying $\frac{5}{4}\log 5\leq C\ll 1$.
		\item (Small codegrees) For any distinct $q_1,q_2\in \mathcal{Q}'$, $$\sum_{p\in\mathcal{P'}}\mathbb{P}(q_1,q_2\in\bold{e}_p)\leq x^{-1/20}.$$
	\end{itemize} 
		Then for any positive integer $m\leq \frac{\log_3x}{\log 5}$, we can find random sets $\bold{e'_p}\subset\mathcal{Q'}$ for each $p\in \mathcal{P'}$ such that 
		$$\#\{q\in\mathcal{Q'}:q\not\in\bold{e'_p} \mbox{  for all } p\in\mathcal{P'}\}\sim 5^{-m}\#\mathcal{Q'}$$ with probability $1-o(1)$. 
		The decay rate in the $o(1)$ and $\sim$ notation are uniform in $\mathcal{P'}$ and $\mathcal{Q'}$.
\end{lemma}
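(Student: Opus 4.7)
The lemma is a hypergraph covering statement of R\"odl--nibble type, so the natural plan is to build $\mathbf{e}'_p$ by iterating a simple single-round covering $m$ times, with each round peeling off a $1 - 5^{-1/m}$ fraction of the remaining uncovered elements. After $m$ rounds, a $5^{-m}$ fraction will be left, as desired. Concretely, I would set $\mathcal{Q}_0 := \mathcal{Q}'$, and in round $j$ draw for each $p \in \mathcal{P}'$ an independent copy $\mathbf{e}_p^{(j)}$ of the given distribution, independently sub-select $p$ with probability $\pi := (\log 5)/(mC)$, and put $\mathbf{f}_p^{(j)} := \mathbf{e}_p^{(j)} \cap \mathcal{Q}_{j-1}$ if $p$ is selected and $\emptyset$ otherwise. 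Then $\mathcal{Q}_j := \mathcal{Q}_{j-1} \setminus \bigcup_p \mathbf{f}_p^{(j)}$, and finally $\mathbf{e}'_p := \bigcup_{j=1}^m \mathbf{f}_p^{(j)}$.

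The core of the argument would be a single-round first-plus-second-moment computation. For a typical $q \in \mathcal{Q}_{j-1}$ satisfying the uniform covering hypothesis, the sparsity bound $\mathbb{P}(q \in \mathbf{e}_p) \leq x^{-1/2-1/10}$ linearizes the independent product to yield
\begin{equation*}
\mathbb{P}(q \in \mathcal{Q}_j \mid \mathcal{Q}_{j-1}) = \prod_{p \in \mathcal{P}'}\bigl(1 - \pi\, \mathbb{P}(q \in \mathbf{e}_p)\bigr) = \exp(-\pi C + o(1)) = 5^{-1/m}(1+o(1)).
\end{equation*}
For the variance, the covariance of the events $\{q_i \in \mathcal{Q}_j\}$ for distinct $q_1, q_2$ contributes only through joint occurrences $\{q_1, q_2 \in \mathbf{e}_p\}$, and the small-codegrees hypothesis bounds their total by $O(x^{-1/20})$, so Chebyshev furnishes $|\mathcal{Q}_j| \sim 5^{-1/m} |\mathcal{Q}_{j-1}|$ with failure probability $o(1/m)$. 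A union bound over the $m \leq (\log_3 x)/\log 5$ rounds then closes the induction.

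The principal obstacle is the bookkeeping of errors across rounds. Each round has $O(|\mathcal{Q}'|/\log_2^2 x)$ exceptional elements (those failing the uniform covering hypothesis), and these accumulate to at most $m \cdot |\mathcal{Q}'|/\log_2^2 x = o(|\mathcal{Q}'|/\log_2 x)$, which is $o(5^{-m}|\mathcal{Q}'|)$ because $5^{-m} \geq 1/\log_2 x$ in the admissible range of $m$. The hypothesis $\#\mathcal{Q}' > (\log_2 x)^3$ is similarly what makes the Chebyshev deviation negligible compared with $5^{-m}|\mathcal{Q}'|$. The threshold $C \geq \tfrac{5}{4}\log 5$ enters because $\pi = (\log 5)/(mC)$ must leave enough slack to absorb these accumulated losses, while the upper bound $C \ll 1$ ensures $\pi m \leq 1$ so that the Bernoulli sub-selection is feasible. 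All implicit constants depend only on the parameters appearing in the hypotheses, which is what yields the uniformity in $\mathcal{P}'$ and $\mathcal{Q}'$ claimed at the end of the statement.
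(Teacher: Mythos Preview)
The paper does not prove this lemma here; it is simply quoted as Corollary~4 of \cite{Fo}. Your iterated-nibble outline is the right overall shape and is indeed the method used there, but there is a genuine arithmetic slip in your parameters. With $\pi = (\log 5)/(mC)$ the per-round survival probability is $\exp(-\pi C) = 5^{-1/m}$, so after $m$ rounds the surviving fraction is $(5^{-1/m})^m = 5^{-1}$, not $5^{-m}$. To end with a $5^{-m}$ fraction each round must retain roughly $1/5$, which forces $\pi \approx (\log 5)/C$; the hypothesis $C \ge \tfrac{5}{4}\log 5$ is precisely what makes this $\pi \le 4/5 < 1$, and the remaining slack is what absorbs the errors from exceptional elements and second-moment fluctuations accumulated over the $m \le (\log_3 x)/\log 5$ rounds. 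Your closing claim that feasibility requires $\pi m \le 1$ is correspondingly off: only $\pi \le 1$ per round is needed.

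A second point worth flagging: your $\mathbf{e}'_p$ is a union of up to $m$ independent draws of $\mathbf{e}_p$, and so is not in general a realisation of $\mathbf{e}_p$ itself. The lemma as abbreviated in this paper does not explicitly forbid that, but the application in the paragraph immediately following it uses that $\mathbf{e}'_p = \{\mathbf{n}'_p + h_i p : 1 \le i \le r\}\cap\mathcal{Q}'$ for a \emph{single} integer $\mathbf{n}'_p$. The full statement in \cite{Fo} records this constraint, and the construction there is organised so that each $p$ is committed in at most one round; your scheme would have to be modified accordingly.
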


Now we show how Lemmas \ref{rdm} and \ref{Covering} imply \eqref{Lower bound for gm}. Let $0<c<1/2$ be small enough so that $\frac{5}{4}\log 5\leq C$. Take $m=\lfloor\frac{\log_3x}{\log 5}\rfloor.$ Let $\vec{a}$ be a vector such that \eqref{rdm1} and \eqref{rdm2} hold. We use Lemma \ref{Covering} with $\mathcal{P'}=\mathcal{P}(\vec{a})$ and $\mathcal{Q'}=\mathcal{MQ}\cap S(\vec{a})$ for the random variable $\bold{n}_p$ conditioned to $\vec{\bold{a}}=\vec{a}$. Then \eqref{rdm0} implies the sparsity condition, and \eqref{rdm2} gives the uniform covering condition. 

Let $q_1,q_2$ be distinct elements of $\mathcal{MQ}\cap S(\vec{a})$. If $q_1,q_2\in \bold{e}_p(\vec{a})$ then $p\mid q_1-q_2$. Since $q_1-q_2=O(x\log x)$ and $p\gg x$, there is at most one $p_0\in \mathcal{P'}$ dividing $q_1-q_2$, which implies
$$\sum_{p\in\mathcal{P'}}\mathbb{P}(q_1,q_2\in\bold{e}_p(\vec{a}))\leq\mathbb{P}(q_1\in\bold{e}_{p_0}(\vec{a}))\leq x^{-1/2-1/10}.$$
This gives the small codegrees condition.

By Lemma \ref{Covering}, there exist variables $\bold{e'}_p(\vec{a})$ satisfying 
\begin{align}\label{good eps}
\#\{q\in\mathcal{MQ}\cap S(\vec{a}):q\not\in\bold{e'_p} \mbox{  for all } p\in\mathcal{P'}\}\sim 5^{-m}\#(\mathcal{MQ}\cap S(\vec{a}))\ll \frac{cx}{\log x}
\end{align}
with probability $1-o(1)$, the implied constant being absolute. Since $\bold{e'_p}=\{\bold{n'}_p+h_ip:1\leq i\leq r\}\cap \mathcal{MQ}\cap S(\vec{a})$, for some random integer $\bold{n'}_p$, we can choose some $n'_p$  so that \eqref{good eps} holds. For each $p\in\mathcal{P}(\vec{a})$ set $b_p\equiv n'_p \pmod{p}$, and for each $p\not\in\mathcal{P}(\vec{a})$ set $b_p \equiv 0 \pmod{p}$. Taking $c$ sufficiently small gives \eqref{rdc} which suffices to prove \eqref{Lower bound for gm}.
	
\section{Proof of Lemma \ref{rdm}}
In this section, we show how the existence of a good sieve weight implies Lemma \ref{rdm}, following Section 6 of \cite{Fo}. Indeed, the methods are identical to those of \cite{Fo}. However, we must make some minor changes since $\sigma$ and $\mathcal{MQ}$ are different in our situation. 

Set $r:=\lfloor{\log^{1/5}x}\rfloor$ and let $(h_1,h_2,\dots,h_r)$ be an admissible $r$-tuple contained in $[0,2r^2]$: for instance, one can take $(h_1,\ldots,h_r)$ to be the first $r$ primes greater than $r$. The following lemma is the main tool for showing the existence of good choices for $(b_p)_{p\in\mathcal{P}}$.
\begin{lemma}\label{existence of good weights}
	Let $x,y$ be defined as before, and suppose $x$ is sufficiently large. Let $r$ be an integer with $$r_0\leq r\leq \log^{1/5} x,$$ for some sufficiently large absolute constant $r_0$, and let $(h_1,h_2,\dots,h_r)$ be an admissible $r$-tuple contained in $[0,2r^2]$. Then there exists a positive quantity $\tau\geq x^{-o(1)}$, a positive quantity $u$ depending only on $r$ with $u\asymp \log r$, and a non-negative weight function $w(p,n)$ defined on $\mathcal{P}\times ([-y,y]\cap \mathbb{Z})$ such that
	\begin{itemize}
		\item Uniformly for every $p\in \mathcal{P}$,
		\begin{equation}\label{nsum}
		\sum_{n\in\mathbb{Z}}w(p,n)=\left(1+O\left(\frac{1}{\log_2^{10}x}\right)\right)\tau \frac{M}{\phi(M)}\frac{y}{\log^r x}.
		\end{equation}
		\item Uniformly for every $q\in \mathcal{MQ}$ and $i=1,2,\dots,r$, 
			\begin{equation}\label{psum}
			\sum_{p\in\mathcal{P}}w(p,q-h_ip)=\left(1+O\left(\frac{1}{\log_2^{10}x}\right)\right)\tau \frac{u}{r}\frac{x}{2\log^r x}.
			\end{equation}
		\item Uniformly for all $p\in \mathcal{P}$ and $n\in \mathbb{Z}$, 
		\begin{align}\label{wpn}
w(p,n)=O\left( x^{1/3+o(1)}\right).
		\end{align}
	\end{itemize}
\end{lemma}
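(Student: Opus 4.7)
The plan is to follow Section 6 of \cite{Fo} and construct $w(p,n)$ as a modified Maynard--Tao sieve weight, with the essential new ingredient being a coprimality restriction to handle the primes dividing $M$ and, more generally, the small primes already used in Stages 1 and 2.

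Fix a smooth symmetric nonnegative $F$ supported on the simplex $\{\vec t \in [0,\infty)^r : \sum_i t_i \le 1\}$, and set $R := x^{1/6 - \delta}$ for a small absolute $\delta > 0$. Let $W$ be the product of all primes $\le z$ outside of $\mathcal{P}$ whose residue class has already been fixed in Stages 1 and 2 (so $W$ includes every prime $\le \log^{20} x$, every prime in $(z, x/4]$, and every prime in $(\log^{20} x, z]$ dividing $k$). Define
\begin{equation*}
w(p,n) := \mathbf{1}_{[-y,y]}(n)\, \mathbf{1}_{(n + h_i p,\, W) = 1\ \forall i}\!\left(\sum_{\substack{d_1, \ldots, d_r \\ d_i \mid n + h_i p\, \forall i \\ (d_i, W p) = 1\, \forall i}} \lambda_{d_1, \ldots, d_r}\right)^{\!2},
\end{equation*}
with $\lambda_{d_1,\ldots,d_r} := \bigl(\prod_i \mu(d_i) d_i\bigr)\, F\!\bigl(\tfrac{\log d_1}{\log R}, \ldots, \tfrac{\log d_r}{\log R}\bigr)$, supported on squarefree tuples with $\prod_i d_i \le R$ and each $d_i$ coprime to $W$. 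Admissibility of $(h_1, \ldots, h_r)$ guarantees the indicator in $w$ is nonzero on a positive-density set of $n$.

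For \eqref{nsum}: expand the square, exchange the order of summation, and count $n \in [-y, y]$ in the arithmetic progression modulo $W\prod_i [d_i, e_i]$ determined by $[d_i, e_i] \mid n + h_i p$ together with the coprimality modulo $W$. Since $W\prod_i [d_i, e_i] \le W R^2 = x^{1/3 + o(1)}$ is much smaller than $y$, the count is a main term plus negligible error, and the standard Maynard--Tao diagonalization yields the shape $\tau \frac{M}{\phi(M)} \frac{y}{\log^r x}$, with $\tau$ proportional to $\int_\Delta F(\vec t)^2\, d\vec t$ and the factor $M/\phi(M)$ coming from the new restriction $(d_i, M) = 1$ combined with the Euler-product computation at primes dividing $M$. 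For \eqref{psum}: fix $q \in \mathcal{MQ}$ and $i$, substitute $n = q - h_i p$, expand the square, and swap summations. The condition $d_i \mid n + h_i p$ becomes $d_i \mid q$ (a condition on $q$ only), while $d_j \mid n + h_j p$ for $j \ne i$ becomes $d_j \mid q + (h_j - h_i)p$, a residue condition on $p$. The inner sum over $p$ then counts primes in $\mathcal{P}$ lying in arithmetic progressions of modulus at most $\prod_{j \ne i}[d_j, e_j] \le R^2 = x^{1/3 + o(1)}$, handled by the Bombieri--Vinogradov-type estimate used in Section 6 of \cite{Fo}; the restriction $p \nmid k$ removes at most $\omega(k) = x^{o(1)}$ primes and is absorbed into the error. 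Diagonalizing produces $\tau \frac{u}{r} \frac{x}{2 \log^r x}$ with $u \asymp \log r$ coming from the one-dimensional eigenvalue integral $\int_0^1 \bigl(\int_\Delta \partial_i F\bigr)^2$. The pointwise bound \eqref{wpn} is immediate: $|\lambda_{\vec d}| \le \prod_i d_i \le R$ and the divisor bound give $|w(p,n)| \le R^2 \cdot x^{o(1)} = x^{1/3 + o(1)}$.

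The main obstacle will be checking that the error terms remain $o$ of the main term uniformly in $p \in \mathcal{P}$ and $q \in \mathcal{MQ}$ in the presence of the new coprimality condition $(d_i, M) = 1$ and the enlarged modulus $W$. Because $M$ and $\kappa$ can be almost as large as $z = x^{o(1)}$, the local density factors at primes dividing $MW$ must be tracked carefully; however, since $M/\phi(M) \cdot \kappa/\phi(\kappa) \asymp 1/c_k$ matches the scale of the $c_k$ built into the definition of $y$, the main term should come out at precisely the claimed size and the error analysis is parallel to that of \cite{Fo}.
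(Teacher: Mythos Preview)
Your definition of $W$ is internally inconsistent (you write ``primes $\le z$'' but the parenthetical includes every prime in $(z,x/4]$), and either reading breaks the argument. If $W$ contains the primes in $(z,x/4]$ then $W \ge \prod_{z<p\le x/4} p = \exp(\Theta(x))$ by the prime number theorem, so the claim ``$W\prod_i[d_i,e_i]\le WR^2 = x^{1/3+o(1)}$'' is false by a factor of $e^{\Theta(x)}$; moreover the indicator $\mathbf{1}_{(n+h_ip,\,W)=1\ \forall i}$ would then force every $n+h_ip$ to have all prime factors $>x/4$, which is precisely the event one is trying to detect, not something one can compute by an elementary count in arithmetic progressions. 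Even restricting to primes $\le z$ (or $\le \log^{20}x$) gives $W\approx e^z$ (resp.\ $\exp(\log^{20}x)$), still superpolynomial in $x$, so the same objection applies. The underlying error is treating a coprimality condition to a huge modulus as if it were a single residue class.

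The paper proceeds differently. It takes the tiny $W=\prod_{p\le 2r^2,\,p\nmid B}p$ and folds the coprimality to $M$ into the auxiliary modulus by setting $B=B^*M$, where $B^*$ is $1$ or a prime handling a possible Siegel zero; the support condition $(\prod_i d_i,\,WB)=1$ then automatically gives $(d_i,M)=1$, and no indicator on $n$ is imposed. Both \eqref{nsum} and \eqref{psum} are then read off as black-box applications of Maynard's Proposition~6.1 in \cite{Ma2}: the first to the system $\mathcal{L}_p=\{n+h_ip\}_{i\le r}$, the second to the transposed system $\tilde{\mathcal{L}}_{q,i}$ with forms $n$ and $(h_j-h_i)n+q$ for $j\ne i$. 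The step that ties the two together, and that your sketch does not mention, is a singular-series comparison showing $\mathfrak{S}(\mathcal{L}_p)=(1+O(r/x))\mathfrak{S}(\tilde{\mathcal{L}}_{q,i})$ uniformly in $p,q,i$; this is what lets one define a single $\tau$ working in both asymptotics, and it uses in an essential way that the prime factors of $q\in\mathcal{MQ}$ coprime to $B$ are all $>x/4$.
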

 The weight $w(p,n)$ can be thought of as a smoothed out indicator function of $n+h_1p,\ldots,n+h_rp$ all being ``almost in $\mathcal{MQ}$''. By ``almost in $\mathcal{MQ}$'' we mean numbers of the form $mq'$, where $m$ is defined as before and $q'$ has only large prime factors. Thus, in light of \eqref{nsum}, the weights are of size $\tau$ on average. In this section we show how Lemma \ref{existence of good weights} implies Lemma \ref{rdm}. Lemma \ref{existence of good weights} will be proved in a later section.

	Recall that $\vec{\bold{a}}=(a_s\pmod s) $ are chosen uniformly and independently for $s\in\mathcal{S}$.
\begin{lemma}\label{st}
	Let $t\leq \log x$ and let $n_1,\dots, n_t$ be distinct integers in $[-x^2,x^2]$. Then 
	\begin{equation*}
	 \mathbb{P}(n_1,\dots,n_t\in S(\vec{\bold{a}}))=\left(1+O\left(\frac{1}{\log^{16}x}\right)\right)\sigma^t
	\end{equation*}
\end{lemma}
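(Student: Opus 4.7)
My plan is to compute the probability exactly using independence of the $\bold{a}_s$, and then compare the resulting product with $\sigma^t$ prime-by-prime.

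Since the residue classes $\bold{a}_s \pmod{s}$ are chosen independently and uniformly, the event $\{n_1, \ldots, n_t \in S(\vec{\bold{a}})\}$ factors as
\begin{equation*}
\mathbb{P}(n_1, \ldots, n_t \in S(\vec{\bold{a}})) = \prod_{s \in \mathcal{S}} \frac{s - \nu_s}{s}, \qquad \nu_s := \#\{n_i \pmod{s} : 1 \leq i \leq t\}.
\end{equation*}
Call a prime $s \in \mathcal{S}$ \emph{bad} if $\nu_s < t$ (i.e., two of the $n_i$ collide mod $s$) and \emph{good} otherwise. Since $s > \log^{20} x$ and $t \leq \log x$, the ratio $t/s$ is tiny, so for good primes a Taylor expansion gives
\begin{equation*}
\frac{1 - t/s}{(1 - 1/s)^t} = 1 + O(t^2/s^2),
\end{equation*}
while for bad primes the cruder bound $(1 - \nu_s/s)/(1-1/s)^t = 1 + O(t/s)$ suffices.

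Next I would bound the cumulative contribution of bad primes. A bad prime $s$ must divide some $n_i - n_j$ with $i \neq j$, and since $|n_i - n_j| \leq 2x^2$ while $s > \log^{20} x$, each nonzero difference has at most $\log(2x^2)/\log(\log^{20} x) \ll \log x / \log_2 x$ prime divisors lying in $\mathcal{S}$. Summing over the $\binom{t}{2}$ pairs, the number of bad primes is $O(t^2 \log x / \log_2 x)$. Hence
\begin{equation*}
\sum_{s \text{ bad}} \frac{t}{s} \ll \frac{t^3 \log x}{\log_2 x \cdot \log^{20} x} \ll \frac{1}{\log^{16} x \cdot \log_2 x}.
\end{equation*}
For the good primes, crude bounds on $\sum_{s > N} 1/s^2$ give
\begin{equation*}
\sum_{s \text{ good}} \frac{t^2}{s^2} \ll \frac{t^2}{\log^{20} x} \ll \frac{1}{\log^{18} x}.
\end{equation*}

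Finally, taking logarithms,
\begin{equation*}
\log \frac{\mathbb{P}(n_1, \ldots, n_t \in S(\vec{\bold{a}}))}{\sigma^t} = \sum_{s \in \mathcal{S}} \log\!\left(\frac{1 - \nu_s/s}{(1-1/s)^t}\right) = O\!\left(\frac{1}{\log^{16} x}\right),
\end{equation*}
and exponentiating yields the claimed estimate. The only non-routine input is the bound on the number of bad primes, which is the main place where the hypotheses $s > \log^{20} x$, $|n_i| \leq x^2$, and $t \leq \log x$ all enter simultaneously; the rest is bookkeeping with Mertens-type estimates. I do not anticipate any genuine obstacle — this is a clean application of independence plus a worst-case analysis of repeated residues.
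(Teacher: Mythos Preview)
Your argument is correct and is precisely the standard computation: the paper itself does not reprove this lemma but declares it identical to Lemma~6.1 of \cite{Fo}, whose proof proceeds exactly as you outline (factor over $s\in\mathcal{S}$ by independence, separate the primes where some $n_i\equiv n_j$, bound their number via the size constraint on $|n_i-n_j|$, and Taylor-expand the remaining factors). There is nothing to add.
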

\begin{proof} Identical to Lemma 6.1 in \cite{Fo}. 
\end{proof}
Lemma \ref{st} quantifies the fact that the events $n\in S(\vec{\bold{a}})$ are almost independent for different choices of $n$. Its uniformity will be useful in what follows and is the most crucial property provided by our choices of $\vec{\bold{a}}$. For instance, the following corollary easily establishes \eqref{rdm1}.
\begin{corollary}\label{sa}
	With probability $1-O(1/\log^{6}x)$, 
	\begin{equation}
	\#\left(\mathcal{MQ}\cap S(\vec{\bold{a}})\right)=\left(1+O\left(\frac{1}{\log^{4}x}\right)\right)\sigma\#\mathcal{MQ}\leq c \frac{41}{1-\epsilon}\frac{x}{\log x}\log_2x.
	\end{equation}
\end{corollary}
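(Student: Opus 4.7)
The plan is to apply the second moment method to $X := \#(\mathcal{MQ}\cap S(\vec{\bold{a}}))$, using Lemma \ref{st} to control the first two moments. Since every $n \in \mathcal{MQ} \subset (x, y]$ lies comfortably in $[-x^{2}, x^{2}]$, Lemma \ref{st} applies uniformly and yields
\begin{align*}
\mathbb{E} X &= \bigl(1 + O(\log^{-16} x)\bigr)\,\sigma\,\#\mathcal{MQ},\\
\mathbb{E} X^{2} &= \sigma\,\#\mathcal{MQ} + \bigl(1 + O(\log^{-16} x)\bigr)\,\sigma^{2}\,\#\mathcal{MQ}\,(\#\mathcal{MQ} - 1),
\end{align*}
the diagonal $n_{1} = n_{2}$ contribution being separated from the off-diagonal one. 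Subtracting $(\mathbb{E} X)^{2}$ gives the variance bound $\mathrm{Var}\, X \ll \sigma\,\#\mathcal{MQ} + (\sigma\,\#\mathcal{MQ})^{2}/\log^{16} x$, and Chebyshev's inequality with threshold $\sigma\,\#\mathcal{MQ}/\log^{4} x$ produces
\begin{align*}
\mathbb{P}\!\left(\,\bigl|X - \sigma\,\#\mathcal{MQ}\bigr| > \frac{\sigma\,\#\mathcal{MQ}}{\log^{4} x}\,\right) \ll \frac{1}{\log^{8} x} + \frac{\log^{8} x}{\sigma\,\#\mathcal{MQ}}.
\end{align*}

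The remaining task is a numerical estimate of $\sigma\,\#\mathcal{MQ}$. To bound $\#\mathcal{MQ}$ from above, I will write each $n = mq \in \mathcal{MQ}$ with $q$ prime in $(x/4, y]$ and $m$ supported on primes of $k$ of size at most $4y/x$, sum $\pi(y/m) - \pi(\max(x/4, x/m))$ over admissible $m$, and apply the prime number theorem. The crucial observation is that our parameters give $4y/x = 4c_{k}\log x \cdot \log_{3} x/\log_{2} x = o(\log x)$, so every prime dividing $m$ is a factor of $M$, and the Euler product $\sum_{m} 1/m$ is therefore bounded above by $M/\phi(M)$. For the lower bound $\sigma\,\#\mathcal{MQ} \gg \log^{14} x$ needed to turn the Chebyshev estimate into the failure probability $O(\log^{-6} x)$, the crude inequality $\#\mathcal{MQ} \geq \pi(y) - \pi(x/4) \gg y/\log x$ suffices (together with Mertens' theorem for the factor $\phi(M)/M$). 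Substituting \eqref{sigma}, $y = c_{k} x\log x\log_{3} x/\log_{2} x$, and $c_{k} = c\,\phi(M)\phi(\kappa)/(M\kappa)$, the factors of $\phi(M)/M$, $\phi(\kappa)/\kappa$, $\log x$, $\log_{3} x$ and $\log_{2} x$ cancel, leaving
\begin{align*}
\sigma\,\#\mathcal{MQ} \leq (1 + o(1)) \cdot c \cdot \frac{40}{1 - \epsilon} \cdot \frac{x\,\log_{2} x}{\log x} \leq c \cdot \frac{41}{1-\epsilon} \cdot \frac{x\,\log_{2} x}{\log x}
\end{align*}
for all large $x$, as required.

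The only mild obstacle is the bookkeeping in estimating $\#\mathcal{MQ}$: one must verify that $4y/x \leq \log x$, check that the resulting Euler product exactly cancels the $\phi(M)/M$ hidden inside $c_{k}$, and confirm that the leading constant $40/(1-\epsilon)$ leaves enough slack to absorb the $(1 + o(1))$ into the stated bound with constant $41/(1-\epsilon)$. Once the bookkeeping is done, the remainder is a completely standard second moment concentration argument driven by Lemma \ref{st}.
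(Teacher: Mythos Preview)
Your proposal is correct and follows essentially the same route as the paper: compute the first two moments of $\#(\mathcal{MQ}\cap S(\vec{\bold{a}}))$ via Lemma~\ref{st} and apply Chebyshev's inequality with threshold $\sigma\#\mathcal{MQ}/\log^4 x$. Your write-up is in fact more complete than the paper's, which records only the second-moment calculation and leaves both the numerical inequality $\sigma\,\#\mathcal{MQ}\leq c\frac{41}{1-\epsilon}\frac{x}{\log x}\log_2 x$ and the crude lower bound $\sigma\,\#\mathcal{MQ}\gg \log^{16}x$ (needed to absorb the diagonal contribution to the variance) implicit.
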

\begin{proof}
From Lemma \ref{st}, 
\begin{align*}
\mathbb{E} \#(\mathcal{MQ}\cap S(\vec{\bold{a}}))&=\sum_{q\in\mathcal{MQ}}\mathbb{P}(q\in S(\vec{\bold{a}}))=\left(1+O\left(\frac{1}{\log^{16}x}\right)\right)\sigma\#\mathcal{MQ},\\
\mathbb{E}\left( \#(\mathcal{MQ}\cap S(\vec{\bold{a}}))\right)^2&=\sum_{q_1,q_2\in\mathcal{MQ}}\mathbb{P}(q_1,q_2\in S(\vec{\bold{a}}))\\
&=\left(1+O\left(\frac{1}{\log^{16}x}\right)\right)\left(\sigma\#\mathcal{MQ}+\sigma^2\#\mathcal{MQ}(\#\mathcal{MQ}-1)\right),
\end{align*}
Thus, 
\begin{align*}
\mathbb{E}\left( \#(\mathcal{MQ}\cap S(\vec{\bold{a}}))-\sigma\#\mathcal{MQ}\right)^2
=O\left(\frac{1}{\log^{16}x}\right)\left(\sigma\#\mathcal{MQ}\right)^2,
\end{align*}
and by Chebyshev's inequality  
\begin{align*}
\mathbb{P}\left( \left|\#(\mathcal{MQ}\cap S(\vec{\bold{a}}))-\sigma\#\mathcal{MQ}\right|\geq\frac{\sigma\#\mathcal{MQ}}{\log^4 x}\right)
\ll\frac{\left(\sigma\#\mathcal{MQ}\right)^2/\log^{16}x}{\sigma^2\#\mathcal{MQ}^2\log^{-8}x}\ll\frac{1}{\log^8x}.
\end{align*}
\end{proof}
	
Our next aim is to prove \eqref{rdm0}. We consider the integers in $[-y,y]$ with probability density 
$$\mathbb{P}(\tilde{\bold{n}}_p=n)=\frac{w(p,n)}{\sum_{n'}w(p,n')}.$$
For fixed $\vec{a}$ , let 
\begin{align*}X_p(\vec{a})=\sum_{n}\mathbb{P}(\tilde{\bold{n}}_p=n)\mathds{1}_{\{n+h_ip\in S(\vec{a})\forall i\}},\\
Z_p(n;\vec{a})=\mathbb{P}(\tilde{\bold{n}}_p=n)\mathds{1}_{\{n+h_ip\in S(\vec{a})\forall i\}}.
\end{align*}
Let $\mathcal{P}(\vec{a})$ denote the set of primes in $\mathcal{P}$ such that 
\begin{equation}\label{depa}
\left|X_p(\vec{a})-\sigma^r\right|\leq \frac{\sigma^r}{\log^3 x}.
\end{equation}	
For $p\in \mathcal{P}\setminus \mathcal{P}(\vec{a})$, set $\bold{n}_p=0$. For $p\in \mathcal{P}(\vec{a})$, let $\bold{n}_p$ have the conditional distribution
$$\mathbb{P}(\bold{n}_p=n|\vec{\bold{a}}=\vec{a})=\frac{Z_p(n;\vec{a})} {X_p(\vec{a})}.$$ 

\begin{lemma}\label{pa}
	With probability $1-O(1/\log^{3} x)$, $P(\vec{a})$ contains all but $O(\frac{1}{\log^{3} x}\frac{x}{\log x})$ elements of $\mathcal{P}$.
\end{lemma}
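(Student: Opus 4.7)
The plan is to apply the second moment method to $X_p(\vec{\bold{a}})$, viewed as a random variable in $\vec{\bold{a}}$ for each fixed $p\in\mathcal{P}$, and to deduce the lemma via Chebyshev's inequality followed by Markov on the expected number of ``bad'' primes. All the near-independence needed is already packaged in Lemma \ref{st}.

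For the mean, note that for fixed $p$ and $n$ the $r$ shifts $n+h_1p,\ldots,n+h_rp$ are distinct integers of size $O(y+r^2 x)=O(x\log x)\leq x^2$, and $r\leq\log^{1/5}x\leq\log x$, so Lemma \ref{st} gives $\mathbb{P}(n+h_ip\in S(\vec{\bold{a}})\text{ for all }i)=(1+O(\log^{-16}x))\sigma^r$ uniformly in $n$. Summing against the density of $\tilde{\bold{n}}_p$ yields $\mathbb{E}X_p(\vec{\bold{a}})=(1+O(\log^{-16}x))\sigma^r$. For the variance, I expand
$$\mathbb{E}X_p^2=\sum_{n_1,n_2}\mathbb{P}(\tilde{\bold{n}}_p=n_1)\,\mathbb{P}(\tilde{\bold{n}}_p=n_2)\,\mathbb{P}\bigl(\{n_1+h_ip,n_2+h_jp:1\leq i,j\leq r\}\subset S(\vec{\bold{a}})\bigr)$$
and split according to whether the $2r$ shifts are all distinct. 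On the generic piece Lemma \ref{st} (applied to $2r\leq\log x$ integers) gives probability $(1+O(\log^{-16}x))\sigma^{2r}$, contributing a term that matches $(\mathbb{E}X_p)^2$ up to $O(\sigma^{2r}/\log^{16}x)$. The coincident piece is the union of the diagonal $n_1=n_2$ and the $O(r^2)$ sheared diagonals $n_1-n_2=(h_j-h_i)p$; on each sheared diagonal $n_1$ is determined by $n_2$. Using \eqref{wpn} divided by \eqref{nsum} (and $\tau\geq x^{-o(1)}$), the density satisfies $\max_n \mathbb{P}(\tilde{\bold{n}}_p=n)\leq x^{-2/3+o(1)}$, so the coincident contribution to $\mathbb{E}X_p^2$ is $\ll r^2 x^{-2/3+o(1)}\sigma^r$. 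Since $r\leq\log^{1/5}x$ and $\sigma\asymp(\log_2 x)^2/(\log x\log_3 x)$ force $\sigma^r\geq x^{-o(1)}$, this is far smaller than $\sigma^{2r}/\log^{16}x$, and we conclude $\operatorname{Var}X_p\ll \sigma^{2r}/\log^{16}x$.

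Because $|\mathbb{E}X_p-\sigma^r|\ll \sigma^r/\log^{16}x$, the event $p\notin\mathcal{P}(\vec{\bold{a}})$ forces $|X_p-\mathbb{E}X_p|\gg\sigma^r/\log^3 x$, and Chebyshev's inequality gives $\mathbb{P}(p\notin\mathcal{P}(\vec{\bold{a}}))\ll 1/\log^{10}x$. Summing over $p\in\mathcal{P}$ yields $\mathbb{E}\#(\mathcal{P}\setminus\mathcal{P}(\vec{\bold{a}}))\ll \#\mathcal{P}/\log^{10}x\ll x/\log^{11}x$, and Markov's inequality then produces $\#(\mathcal{P}\setminus\mathcal{P}(\vec{\bold{a}}))=O(x/\log^{4}x)=O\bigl(\tfrac{1}{\log^{3}x}\cdot\tfrac{x}{\log x}\bigr)$ with probability $1-O(1/\log^{3}x)$, as claimed. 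The main obstacle is the bookkeeping for the coincident pairs in the variance: one has to verify that the diagonal and the $O(r^2)$ sheared diagonals are genuinely negligible, and this is precisely where the pointwise bound \eqref{wpn}, the normalization \eqref{nsum}, and the quantitative lower bound $\sigma^r\geq x^{-o(1)}$ jointly supply the needed $x^{-2/3+o(1)}$ savings.
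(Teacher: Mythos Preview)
Your proof is correct and follows essentially the same second-moment route as the paper's reference to Lemma~6.3 of \cite{Fo}: compute $\mathbb{E}X_p$ and $\mathbb{E}X_p^2$ via Lemma~\ref{st}, control the off-diagonal collisions using the pointwise density bound $\max_n \mathbb{P}(\tilde{\bold{n}}_p=n)\leq x^{-2/3+o(1)}$, then apply Chebyshev for each $p$ and Markov across $\mathcal{P}$. The bookkeeping you flag---that the coincident pairs lie on $O(r^2)$ lines and are dominated by $\sigma^{2r}/\log^{16}x$ because $\sigma^r\geq x^{-o(1)}$---is exactly the point, and your treatment of it is clean.
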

\begin{proof} The proof is identical to Lemma 6.3 in \cite{Fo}.
%
\end{proof}

Recall that $\bold{e}_p(\vec{a})=\{\bold{n}_p+h_ip:1\leq i\leq r\}\cap\mathcal{MQ}\cap S(\vec{a}).$  Hence, for $p\in \mathcal{P}(\vec{a})$, we have 
$$\mathbb{P}(q\in\bold{e}_p(\vec{a}))=\sum_{i=1}^r\mathbb{P}(\bold{n}_p=q-h_ip|\vec{\bold{a}}=\vec{a})\leq \frac{r x^{1/3+o(1)}}{(1+\frac{1}{\log^3 x})\sigma^r y}\leq x^{-1/2-1/10},$$ which proves (\ref{rdm0}). 

It remains to prove (\ref{rdm2}). Define $C := \frac{\phi(M)}{M}\frac{ux}{2 \sigma y}$. Note that by our choice of $y$, \eqref{ck}, and \eqref{sigma}, we have
\begin{align*}
C = \frac{\phi(M)}{M}\frac{ux}{2\sigma y}&\asymp  \frac{\phi(\kappa)}{\kappa}\frac{\phi(M)}{M}\frac{\log x\log_3x}{(\log_2x)^2y}x\log_2  x \asymp \frac{1}{c}.
\end{align*}
\begin{lemma}\label{coveru}
With probability $1-O(1/\log_2^2 x)$, we have
\begin{align*}
\sigma^{-r}\sum_{p\in \mathcal{P}(\vec{\bold{a}})}\sum_{i=1}^rZ_p(q-h_ip;\vec{\bold{a}})&=\left(1+O\left(\frac{1}{\log_2^{3}x}\right)\right) C,
\end{align*}
for all but $O\left(\frac{1}{\log_2^{2}x} \#\left(\mathcal{MQ}\cap S(\vec{\bold{a}})\right)\right)$ elements of $\mathcal{MQ}\cap S(\vec{\bold{a}})$.
\end{lemma}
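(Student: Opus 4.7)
The plan is a first-and-second-moment argument in $\vec{\bold{a}}$, mirroring Lemma 6.4 of \cite{Fo} but with our choices of $\sigma$, $\mathcal{MQ}$, and $y$. For each fixed $q\in\mathcal{MQ}$ I will show that the quantity $V(q;\vec{\bold{a}})$ on the left-hand side of the lemma satisfies
\[
\mathbb{E}[V(q;\vec{\bold{a}})\mid q\in S(\vec{\bold{a}})]=(1+O(\log_{2}^{-10}x))C,\qquad\mathrm{Var}(V(q;\vec{\bold{a}})\mid q\in S(\vec{\bold{a}}))=O(C^{2}/\log_{2}^{10}x).
\]
Chebyshev then gives $\mathbb{P}(|V(q;\vec{\bold{a}})-C|>C/\log_{2}^{3}x\mid q\in S(\vec{\bold{a}}))=O(\log_{2}^{-4}x)$, and summing this against $\mathbb{P}(q\in S(\vec{\bold{a}}))\sim\sigma$ followed by Markov's inequality shows that at most $\sigma\#\mathcal{MQ}/\log_{2}^{2}x$ elements of $\mathcal{MQ}\cap S(\vec{\bold{a}})$ are bad, except on an $\vec{\bold{a}}$-event of probability $O(\log_{2}^{-2}x)$; Corollary \ref{sa} then converts $\sigma\#\mathcal{MQ}$ to $\#(\mathcal{MQ}\cap S(\vec{\bold{a}}))$, matching the conclusion.

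For the first moment, it is convenient to first replace $V$ by the unrestricted-$\mathcal{P}$ version $Y(q;\vec{a}):=\sigma^{-r}\sum_{p\in\mathcal{P}}\sum_{i=1}^{r}Z_{p}(q-h_{i}p;\vec{a})$; Lemma \ref{pa} together with the pointwise bound \eqref{wpn} will show $|V-Y|=o(C/\log_{2}^{3}x)$ outside an event of probability $O(\log^{-3}x)$, which is absorbed into the main error. Unfolding $Z_{p}(q-h_{i}p;\vec{a})=(w(p,q-h_{i}p)/W_{p})\mathds{1}_{\{q+(h_{j}-h_{i})p\in S(\vec{a})\,\forall j\}}$ with $W_{p}:=\sum_{n}w(p,n)$, the $r$ shifted integers $q+(h_{j}-h_{i})p$ are distinct (since the $h_{j}$ are) and lie in $[-x^{2},x^{2}]$, and one of them equals $q$ itself; Lemma \ref{st} therefore gives that conditional on $q\in S(\vec{\bold{a}})$ the remaining $r-1$ points lie in $S(\vec{\bold{a}})$ with probability $(1+O(\log^{-16}x))\sigma^{r-1}$. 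Combining with \eqref{nsum} to factor out $W_{p}$ uniformly in $p$, and \eqref{psum} to evaluate $\sum_{p\in\mathcal{P}}w(p,q-h_{i}p)$, the $r$ inner terms assemble into
\[
\mathbb{E}[Y(q;\vec{\bold{a}})\mid q\in S(\vec{\bold{a}})]=(1+O(\log_{2}^{-10}x))\,\sigma^{-1}\frac{\phi(M)}{M}\frac{ux}{2y}=(1+O(\log_{2}^{-10}x))C.
\]

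For the second moment, expand $\mathbb{E}[Y(q;\vec{\bold{a}})^{2}\mid q\in S(\vec{\bold{a}})]$ and split by whether $p_{1}=p_{2}$. In the off-diagonal case the union $\{q+(h_{j_{1}}-h_{i_{1}})p_{1}\}_{j_{1}}\cup\{q+(h_{j_{2}}-h_{i_{2}})p_{2}\}_{j_{2}}$ has cardinality exactly $2r-1$ (the only coincidence is $q$, since $p_{1},p_{2}>x/2\gg 2r^{2}$ and the $h$-differences are bounded by $2r^{2}$), so Lemma \ref{st} conditionally yields joint probability $(1+O(\log^{-16}x))\sigma^{2r-2}$; after the $\sigma^{-2r}$ cancels and the $w$-sums are evaluated as in the first moment, the off-diagonal contribution equals $(1+O(\log_{2}^{-10}x))C^{2}$, matching the square of the mean. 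The diagonal $p_{1}=p_{2}$ contribution is controlled using \eqref{wpn}: bounding one factor of $w$ pointwise by $x^{1/3+o(1)}$ and summing the other via \eqref{psum} yields a bound $O(x^{-2/3+o(1)})$, using $y\asymp x\log x\log_{3}x/\log_{2}x$, $\sigma^{r}=x^{o(1)}$, and $r\le\log^{1/5}x$; this is negligible against $C^{2}/\log_{2}^{10}x$. Subtracting the mean squared gives the required variance bound.

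The chief technical obstacle I anticipate is bookkeeping: ensuring that the relative errors $O(\log^{-16}x)$ from Lemma \ref{st}, $O(\log_{2}^{-10}x)$ from \eqref{nsum} and \eqref{psum}, and $O(\log^{-3}x)$ from Lemma \ref{pa} interact cleanly to produce the target relative precision $O(\log_{2}^{-10}x)$ in both moments. A subordinate issue is verifying that the $2r$ shifted points in the off-diagonal second-moment calculation are indeed almost always distinct (which uses admissibility of $(h_{1},\ldots,h_{r})$ in $[0,2r^{2}]$ together with $p>x/2\gg r^{2}$) and that the transition from $Y$ to $V$ via Lemma \ref{pa} is harmless; both should be routine adaptations of the corresponding argument in Section 6 of \cite{Fo}.
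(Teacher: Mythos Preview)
Your overall strategy is correct and matches the paper's: a first- and second-moment computation in $\vec{\bold{a}}$ for the $\mathcal{P}$-unrestricted quantity $Y(q;\vec{\bold{a}})$, followed by Chebyshev/Markov to count bad $q$'s, with a separate argument to pass from $\mathcal{P}$ to $\mathcal{P}(\vec{\bold{a}})$. The moment calculations you sketch (including the $2r-1$ distinct points in the off-diagonal and the crude diagonal bound via \eqref{wpn}) are correct and essentially identical to the paper's, which organizes the same computation as $\mathbb{E}\sum_{q\in\mathcal{MQ}\cap S(\vec{\bold{a}})}(Y(q;\vec{\bold{a}})-C)^2$ rather than conditioning on $q\in S(\vec{\bold{a}})$ for each $q$; the two are equivalent.

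There is one genuine error in your handling of the transition $Y\to V$. You claim that Lemma~\ref{pa} together with the pointwise bound \eqref{wpn} gives $|V(q;\vec{\bold{a}})-Y(q;\vec{\bold{a}})|=o(C/\log_2^3 x)$ uniformly in $q$ outside an $\vec{\bold{a}}$-event of small probability. This is false: the pointwise bound only gives $Z_p(n;\vec{a})\le x^{1/3+o(1)}/y$, so even with $\#(\mathcal{P}\setminus\mathcal{P}(\vec{\bold{a}}))\ll x/\log^4 x$ you obtain
\[
|Y-V|\le \sigma^{-r}\cdot r\cdot \frac{x^{1/3+o(1)}}{y}\cdot \frac{x}{\log^4 x}=x^{1/3+o(1)},
\]
which is enormous. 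The correct argument (exactly as in the paper and in \cite{Fo}) is not uniform in $q$ but an $\ell^1$-bound: one shows $\mathbb{E}\,\sigma^{-r}\sum_n\sum_{p\in\mathcal{P}\setminus\mathcal{P}(\vec{\bold{a}})}Z_p(n;\vec{\bold{a}})=O(x/\log^4 x)$ using Lemma~\ref{pa} and \eqref{depa}, and then Markov's inequality shows that the set of $q\in\mathcal{MQ}\cap S(\vec{\bold{a}})$ with $|Y(q)-V(q)|\ge 1/\log x$ has size $O(rx/\log^2 x)\ll \#(\mathcal{MQ}\cap S(\vec{\bold{a}}))/\log_2^2 x$. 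These exceptional $q$ are then absorbed into the allowed exceptional set, not into the error term of the asymptotic for $V$. Your concluding remark that this step is a ``routine adaptation'' of \cite{Fo} is right in spirit, but the specific mechanism you wrote down does not work.
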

\begin{proof}

First we show that, with probability $1-O(1/\log x)$, replacing $\mathcal{P}(\vec{\bold{a}})$ by $\mathcal{P}$ has a negligible effect on the sum. By Lemma \ref{st}, we have 
	\begin{align*}
	\mathbb{E}\ \sigma^{-r}\sum_{n}\sum_{p\in \mathcal{P}}Z_p(n;\vec{\bold{a}})&=\sigma^{-r}\mathbb{E}\ \sum_n\sum_{p\in \mathcal{P}}\mathbb{P}(\tilde{\bold{n}}_p=n)\mathbb{P}(n+h_ip\in S(\vec{\bold{a}}),\forall i)\\
	&=\left(1+O\left(\frac{1}{\log^{16}x}\right)\right)\#\mathcal{P}.
	\end{align*}
From \eqref{depa} and Lemma \ref{pa}, we have
\begin{align*}
\mathbb{E}\ \sigma^{-r}\sum_{n}\sum_{p\in \mathcal{P}(\vec{\bold{a}})}Z_p(n;\vec{\bold{a}})&=\sigma^{-r}\mathbb{E}\sum_{p\in \mathcal{P}(\vec{\bold{a}})}\sum_n\mathbb{P}(\tilde{\bold{n}}_p=n)\mathds{1}(n+h_ip\in S(\vec{{\bold{a}}})\forall i)\\
&=\sigma^{-r}\mathbb{E}\sum_{p\in \mathcal{P}(\vec{\bold{a}})}X_p(\vec{\bold{a}})=\left(1+O\left(\frac{1}{\log^3x}\right)\right)\mathbb{E}\#\mathcal{P}(\vec{{\bold{a}}})\\
&=\left(1+O\left(\frac{1}{\log^3x}\right)\right)\#\mathcal{P}
\end{align*}	
Thus
\begin{align*}
\mathbb{E}\ \sigma^{-r}\sum_{n}\sum_{p\in\mathcal{P}\setminus \mathcal{P}(\vec{\bold{a}})}Z_p(n;\vec{\bold{a}})
&=O\left(\frac{1}{\log^3x}\#\mathcal{P}\right)=O\left(\frac{x}{\log^4x}\right). 
\end{align*}
By Markov's inequality, with probability $1-O\left(\frac{1}{\log x}\right)$,
\begin{align*}
 \sigma^{-r}\sum_{n}\sum_{p\in\mathcal{P}\setminus \mathcal{P}(\vec{\bold{a}})}Z_p(n;\vec{\bold{a}})
&=O\left(\frac{x}{\log^3x}\right). 
\end{align*} 
By restricting $n$ to $q-h_ip$ for each $i=1,\dots, r$, we see that with probability $1-O(1/\log x)$ we have
\begin{align*}
\#\Bigg\{q\in\mathcal{MQ}\cap S(\vec{\bold{a}})&:\left|\sigma^{-r}\sum_{i=1}^r\sum_{p\in\mathcal{P}\setminus \mathcal{P}(\vec{\bold{a}})}Z_p(q-h_ip;\vec{\bold{a}})\right|\geq \frac{1}{\log x}\Bigg\} \\
&\leq \log x \sum_{q \in \mathcal{MQ} \cap S(\vec{\bold{a}})} \sigma^{-r} \sum_{i=1}^r \sum_{p \in \mathcal{P}\setminus \mathcal{P}(\vec{\bold{a}})} Z_p(q - h_i p ; \vec{\bold{a}}) \\
&\ll \frac{rx}{\log^2 x} \ll \frac{x}{\log x\log_2^2x}. 
\end{align*}
By \eqref{nsum} and \eqref{psum} in Lemma \ref{existence of good weights}, we have for all $q \in \mathcal{MQ}$ and for all $1 \leq i \leq r$,
\begin{align*}
\sum_{p\in \mathcal{P}}\mathbb{P}(\tilde{\bold{n}}_p=q-h_ip)
&= \left(1+O\left(\frac{1}{\log_2^{10}x}\right)\right)\frac{C}{r}.\ 
\end{align*}
Using this relation and Lemma \ref{st}, we have
\begin{align*}
&\ \ \ \ \ \mathbb{E}\ \sum_{q\in\mathcal{MQ}\cap S(\vec{\bold{a}})}\sigma^{-r}\sum_{i=1}^r\sum_{p\in\mathcal{P}}Z_p(q-h_ip;\vec{\bold{a}})\\&=\sigma^{-r}\sum_{q\in \mathcal{MQ}}\sum_{i=1}^r\sum_{p\in \mathcal{P}}\mathbb{P}(q+(h_j-h_i)p\in S(\vec{\bold{a}})\forall j)\mathbb{P}(\tilde{\bold{n}}_p=q-h_ip)\\
&=\left(1+O\left(\frac{1}{\log^{16}x}\right)\right)\sum_{q\in \mathcal{MQ}}\sum_{i=1}^r\sum_{p\in \mathcal{P}}\mathbb{P}(\tilde{\bold{n}}_p=q-h_ip)\\
&=\left(1+O\left(\frac{1}{\log_2^{10}x}\right)\right) C\sigma\#\mathcal{MQ}.
\end{align*} 
We also compute the second moment:
\begin{align*}
&\ \ \ \ \ \mathbb{E}\ \sum_{q\in\mathcal{MQ}\cap S(\vec{\bold{a}})}\left(\sigma^{-r}\sum_{i=1}^r\sum_{p\in\mathcal{P}}Z_p(q-h_ip;\vec{\bold{a}})\right)^2\\&=\sigma^{-2r}\sum_{\substack{q\in \mathcal{MQ},p_l\in\mathcal{P}\\i_l=1\dots,r,i=1,2}}\mathbb{P}(q+(h_j-h_{i_l})p_l\in S(\vec{\bold{a}}), \forall 1\leq j\leq r,l=1,2) \prod_{l=1}^2 \mathbb{P}(\tilde{\bold{n}}_{p_l}=q-h_{i_l}p_l)\\
&=\left(1+O\left(\frac{1}{\log^{16}x}\right)\right)\sigma^{-1}\sum_{q\in \mathcal{MQ}}\left(\sum_{i=1}^r\sum_{p\in \mathcal{P}}\mathbb{P}(\tilde{\bold{n}}_p=q-h_ip)\right)^2\\
&=\left(1+O\left(\frac{1}{\log_2^{10}x}\right)\right) C^2\sigma\#\mathcal{MQ}, 
\end{align*} 
where in the second equality the contribution of the diagonal terms is negligible. From Corollary \ref{sa}, we may assume that $\#(\mathcal{MQ} \cap S(\vec{\bold{a}})) = (1 + O(1/\log^4 x)) \sigma \#\mathcal{MQ}$. Then by Markov's inequality, we have
\begin{align*}
\mathbb{P} \left( \sum_{q\in\mathcal{MQ}\cap S(\vec{\bold{a}})} \left(\sigma^{-r}\sum_{i=1}^r\sum_{p\in\mathcal{P}}Z_p(q-h_ip;\vec{\bold{a}}) - C\right)^2 \geq \frac{\sigma \#\mathcal{MQ} C^2}{\log_2^8 x} \right) \ll \frac{1}{\log_2^2 x}.
\end{align*}
Therefore, with probability $1 - O(1/\log_2^2 x)$, we have
\begin{align*}
\sum_{q\in\mathcal{MQ}\cap S(\vec{\bold{a}})} \left(\sigma^{-r}\sum_{i=1}^r\sum_{p\in\mathcal{P}}Z_p(q-h_ip;\vec{\bold{a}}) - C\right)^2 \leq \frac{\sigma \#\mathcal{MQ} C^2}{\log_2^8 x}.
\end{align*}
It follows that with probability $1- O(1/\log_2^2 x)$ we have
\begin{align*}
&\#\left\{q\in\mathcal{MQ}\cap S(\vec{\bold{a}}):\left|\sigma^{-r}\sum_{i=1}^r\sum_{p\in\mathcal{P}}Z_p(q-h_ip;\vec{\bold{a}})-C\right|\geq \frac{C}{\log_2^{3}x}\right\} \\&\ll \frac{1}{\log_2^{2}x} \sigma\#\mathcal{MQ}\ll \frac{1}{\log_2^{2}x}\#(\mathcal{MQ}\cap S(\vec{\bold{a}})),
\end{align*}
which concludes the proof.
\end{proof}
Since
\begin{align*}\sum_{p\in \mathcal{P}(\vec{a})}\mathbb{P}(q\in \bold{e}_p(\vec{a})|\vec{\bold{a}}=
\vec{a})
&=\left(1+\left(\frac{1}{\log^3x}\right)\right)\sigma^{-r}\sum_{i=1}^r\sum_{p\in\mathcal{P}(\vec{a})}Z_p(q-h_ip;\vec{a})\\
&=C+O_\leq\left(\frac{1}{\log_2^2x}\right),
\end{align*} 
we obtain \eqref{rdm2}.

\section{Proof of Lemma \ref{existence of good weights}}
We have seen that, in order to complete the proof of Theorem \ref{thm}, it suffices to prove the existence of a weight function $w(p,n)$ with the properties claimed in Lemma \ref{existence of good weights}. In this section we make some preliminary reductions in order to apply a general result of Maynard (Proposition 6.1 of \cite{Ma2}) on primes and linear forms.

The results on large prime gaps in \cite{Fo} rely on Maynard-Tao prime-detecting sieve weights. The biggest difference between the weights we use and the weights described in \cite{Fo} is the following. In \cite{Fo}, the authors use a parameter $B$ to avoid Siegel zeros and make their results effective. Here we modify $B$ and set $B = B^* M$, where $B^*$ is the parameter used to avoid Siegel zeros and $M = \prod_{p \leq \log x, p\mid k} p$, as above. We remark that $B^*$ will either be one or a prime of size $\gg \log_2 x$. Now $B$ is used not only to make Theorem \ref{thm} effective, but also to avoid giving small weight to integers of the form $q' m$, where $q'$ is prime and all the prime factors of $m$ divide $M$. We remark that now our $B$ plays a more important role, yet we give it the same notation so it aligns well with the statements of \cite{Ma2}. 

Let $\mathcal{L} = \{L_1 , \ldots , L_r\}$ be a set of distinct linear forms, $L_i(n) = a_i n + b_i$. Define the singular series of $\mathcal{L}$ to be 
\begin{align*}
\mathfrak{S}(\mathcal{L}) := \prod_{\substack{s \text{ prime} \\ s \nmid B}} \left(1 - \frac{\omega_{\mathcal{L}}(s)}{s}\right)\left(1-\frac{1}{s}\right)^{-r} ,
\end{align*}

where $$\omega_{\mathcal{L}}(s) := \#\{n \in \mathbb{Z}/s\mathbb{Z} : \prod_{i=1}^r L_i(n) \equiv 0 \pmod{s}\}.$$ Since the two sums in Lemma \ref{existence of good weights} are different, we will require two sets of linear forms. Fix a prime $p \in \mathcal{P}$ and let $\mathcal{L}_p := \{L_{p,1} , \ldots , L_{p,r}\}$, where \begin{align}\label{L1}
L_{p,i}(n) := n + h_i p,\ i=1,\dots,r.
\end{align}
Fix $q \in \mathcal{MQ}$ and $i \in \{1,\ldots,r\}$, let $\widetilde{\mathcal{L}}_{q,i} := \{\tilde{\mathcal{L}}_{q,i,1},\ldots,\tilde{\mathcal{L}}_{q,i,r}\},$ where
\begin{align}\label{L2}
\tilde{\mathcal{L}}_{q,i,j}(n) :=\left\{\begin{array}{lr}
n & \text{if } j=i,\\
(h_j-h_i)n+q & \text{if } j\not =i.
\end{array}\right.
\end{align}

For all that follows we set the level of distribution $\theta := \frac{1}{3}$.

\begin{lemma}\label{Our mega lemma}
	There exist quantities $I_r,J_r$ depending only on $r$ with
	\begin{align*}
	I_r \gg (2r\log r)^{-r}, \ \ \ \ \ \ \ \ J_r \asymp \frac{\log r}{r} I_r,
	\end{align*}
	and weights $w(p,n)$ such that the following assertions hold uniformly for $x^{\theta/10} \leq R \leq x^{\theta/3}$.
	\begin{itemize}
		\item Uniformly in $p \in \mathcal{P}$, we have 
		\begin{align}\label{psum in 7point1}
		\sum_{-y \leq n \leq y} w(p,n) = \left(1 + O \left(\frac{1}{(\log x)^{1/10}} \right) \right) \frac{B^r}{\phi(B)^r}\mathfrak{S}(\mathcal{L}_p) (2y) (\log R)^r I_r.
		\end{align}
		\item Uniformly for $q \in \mathcal{MQ}$, and $1 \leq i \leq r$, we have
		\begin{align*}
		\sum_{p \in \mathcal{P}} w(p,q - h_i p) = &\left(1 + O \left(\frac{1}{(\log x)^{1/10}} \right) \right)  \frac{B^{r-1}}{\phi(B)^{r-1}} \mathfrak{S}(\tilde{\mathcal{L}}_{q,i}) (\frac{x}{2\log x})(\log R)^{r+1} J_r \\
		&+ O \left(\frac{B^r}{\phi(B)^r} \mathfrak{S}(\tilde{\mathcal{L}}_{q,i}) x(\log R)^{r-1} I_r \right).
		\end{align*}
		\item We have the upper bound $w(p,n) \ll x^{2\theta/3+o(1)}$ for all $n \in \mathbb{Z}$, $p \in \mathcal{P}$.
	\end{itemize}
The implied constants depend at most on $\theta$.
\end{lemma}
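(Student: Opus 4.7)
The plan is to take $w(p,n)$ to be a Maynard--Tao sieve weight adapted to the enlarged coprimality parameter $B = B^* M$. Pick a smooth symmetric $F : [0,\infty)^r \to \mathbb{R}$ supported on $\{\vec t \ge 0 : \sum t_i \le 1\}$ and set
$$\lambda_{d_1,\dots,d_r} := \Bigl(\prod_{i=1}^{r}\mu(d_i)d_i\Bigr) \sum_{\substack{r_1,\dots,r_r \\ d_i\mid r_i\,\forall i,\ (r_i,B)=1 \\ \mu(r_1\cdots r_r)^2 = 1}} \frac{F\bigl(\tfrac{\log r_1}{\log R},\dots,\tfrac{\log r_r}{\log R}\bigr)}{\prod_{i=1}^{r} \phi(r_i)},$$
$$w(p,n) := \Bigl(\sum_{\substack{d_1,\dots,d_r \\ d_i \mid n+h_ip\,\forall i \\ (d_i,B)=1\,\forall i}} \lambda_{d_1,\dots,d_r}\Bigr)^{2}.$$
Let $I_r, J_r$ be the standard Maynard functionals of $F$; choose $F$ as in \cite{Ma2} so that $I_r \gg (2r\log r)^{-r}$ and $J_r \asymp I_r \log r/r$.

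For the first claim I would fix $p \in \mathcal{P}$ and expand the square. Interchanging summation, the inner count of $n \in [-y,y]$ with $[d_i,e_i] \mid n + h_i p$ for all $i$ equals $\tfrac{2y}{\prod_i[d_i,e_i]} + O(1)$. Since $\prod_i d_i e_i \le R^2 \le x^{2\theta/3}$ and $y \gg x\log x$, the $O(1)$ errors sum to $o(y/\log^r x)$. The main term factors over primes, yielding $\mathfrak{S}(\mathcal{L}_p)$ from local densities and $B^r/\phi(B)^r$ from the restriction $(r_i,B)=1$; the remaining $F$-smoothed sum over the $r_i$'s collapses to $(\log R)^r I_r$ via the usual Maynard contour computation, which gives \eqref{psum in 7point1}.

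For the second claim, fix $q \in \mathcal{MQ}$ and $i$ and substitute $n = q - h_i p$. The condition $d_i \mid n + h_i p$ collapses to $d_i \mid q$, independent of $p$. Since $q = mq'$ with all prime factors of $m$ dividing $M \mid B$, the constraint $(d_i, B) = 1$ forces $d_i \mid q'$; but $q' > x/4 > R$, so $d_i = 1$ is the only surviving choice. The remaining conditions $d_j \mid q + (h_j - h_i)p$ for $j \ne i$ are genuine linear-form constraints on the prime $p$, forming a Maynard--Tao configuration of $r-1$ forms in $p$. Expanding the square and applying Bombieri--Vinogradov (permitted by $\prod_{j\ne i}[d_j, e_j] \le R^2 \le x^{2\theta/3}$ with $\theta = 1/3$), each pair $(\vec d, \vec e)$ contributes a main term $\tfrac{x/(2\log x)}{\prod_{j\ne i}\phi([d_j,e_j])}$ plus negligible error. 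Collecting local factors produces $\mathfrak{S}(\widetilde{\mathcal{L}}_{q,i})$ and the reduced Euler product $B^{r-1}/\phi(B)^{r-1}$; the prime-counting factor $\tfrac{x}{2\log x}$ accounts for the density of $\mathcal{P}$, and the extra $(\log R)$ relative to $I_r$ (producing $J_r$) arises because the $i$-th sieve variable is now unconstrained, contributing the integral $\int_0^1 F\,dt_i$ that converts $I_r$ to $J_r$.

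The pointwise bound follows at once from $|\lambda_{\vec d}| \ll (\log R)^r$ together with the fact that $w(p,n)^{1/2}$ contains $\ll R^{1+o(1)}$ nonzero terms, so $w(p,n) \ll x^{2\theta/3 + o(1)}$. The main conceptual obstacle is verifying that the enlargement of $B$ from $B^*$ to $B = B^* M$ does no harm: this reduces to the structural observation above that $(d_i,B)=1$ combined with $d_i \mid q$ forces $d_i = 1$ for $q \in \mathcal{MQ}$, together with the elementary bound $B \le B^* \prod_{p\le \log x} p = e^{O(\log x / \log_2 x)} = x^{o(1)}$, which keeps the standard Maynard--Tao error estimates from \cite{Ma2} in force. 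Once this is in hand, the computations of \eqref{psum in 7point1} and of the second display in the lemma are parallel to those of Section 7 of \cite{Fo}, modulo the replacement of their $W$ by our $B$.
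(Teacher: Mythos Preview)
Your approach is essentially the same as the paper's: define Maynard--Tao weights with the enlarged coprimality modulus $B=B^*M$, use the structural fact that for $q\in\mathcal{MQ}$ the condition $(d_i,B)=1$ together with $d_i\mid q$ forces $d_i=1$, and then invoke the machinery of \cite{Ma2}/\cite{Fo}. The paper packages the two sums by verifying Hypothesis~1 and citing the black-box Theorem~6 of \cite{Fo} (stated in the paper as Theorem~\ref{Maynard mega theorem}), whereas you unravel the computation by hand; but the substance---including the key $d_i=1$ observation, which the paper also records explicitly---is identical.

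One small slip: your bound $B \le B^*\prod_{p\le\log x}p = e^{O(\log x/\log_2 x)} = x^{o(1)}$ is incorrect. By the prime number theorem $\prod_{p\le\log x}p = e^{(1+o(1))\log x}$, so in fact $B \ll x^{1+o(1)}$ (the paper records $B\ll x^2$). This does not damage the argument, since the results of \cite{Ma2} only require $B\le x^\alpha$ for some fixed $\alpha$, but you should correct the stated estimate.
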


Lemma \ref{Our mega lemma} will be proved below, basically as a direct consequence of Proposition 6.1 in \cite{Ma2}. We first show how Lemma \ref{Our mega lemma} implies Lemma \ref{existence of good weights}. We require the following result about the singular series $\mathfrak{S}(\mathcal{L}_p)$ and $\mathfrak{S}(\tilde{\mathcal{L}}_{q,i})$.
\begin{lemma}\label{singular}
	Fix $p \in \mathcal{P}$, $q \in \mathcal{MQ}$, $1\leq i\leq r$, and let $\mathfrak{S}(\mathcal{L}_p) $ and $\mathfrak{S}(\tilde{\mathcal{L}}_{q,i})$ be defined as above. Then $$\mathfrak{S}(\mathcal{L}_p) =\left(1+O\left(\frac{r}{x}\right)\right) \mathfrak{S}(\tilde{\mathcal{L}}_{q,i}).$$
\end{lemma}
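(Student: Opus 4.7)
The plan is to compare the Euler products $\mathfrak{S}(\mathcal{L}_p)$ and $\mathfrak{S}(\tilde{\mathcal{L}}_{q,i})$ factor by factor and show that their local factors agree at every prime $s \nmid B$ except at most two ``bad'' primes, whose combined contribution to the ratio is $1 + O(r/x)$.

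First I would compute both local densities explicitly. For $\mathcal{L}_p$, the zero set of $\prod_j L_{p,j}(n)$ modulo $s$ is $\{-h_j p \pmod s : 1 \le j \le r\}$: for $s \neq p$, multiplication by $-p$ is a bijection on $\mathbb{Z}/s\mathbb{Z}$, hence $\omega_{\mathcal{L}_p}(s) = \omega_h(s) := |\{h_j \bmod s\}|$, while for $s = p$ every $L_{p,j}(n) \equiv n \pmod p$ gives $\omega_{\mathcal{L}_p}(p) = 1$. For $\tilde{\mathcal{L}}_{q,i}$ and any $s \nmid q$, the form with $j=i$ forces $n \equiv 0 \pmod s$, while each form with $j \neq i$ contributes the zero $n \equiv -q(h_j - h_i)^{-1} \pmod s$ precisely when $s \nmid h_j - h_i$. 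Grouping the indices $j$ by the residue class of $h_j$ modulo $s$, and using that the map $h \mapsto -q(h - h_i)^{-1}$ is injective on residues with $h \not\equiv h_i \pmod s$, one checks $\omega_{\tilde{\mathcal{L}}_{q,i}}(s) = \omega_h(s)$. Thus the local factors agree whenever $s \neq p$ and $s \nmid q$.

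It remains to identify the bad primes and bound their contribution. Writing $q = m q'$ with $q'$ prime in $(x/4, y]$ and every prime divisor of $m$ lying below $4y/x$ and dividing $k$, the choice $y = c_k x \log x \log_3 x / \log_2 x$ forces $4y/x = o(\log x)$, so every prime factor of $m$ is at most $\log x$ and divides $M$, hence divides $B$, and is excluded from both Euler products. The remaining candidate bad primes are therefore $s \in \{p, q'\} \setminus \{s : s \mid B\}$. At $s = p \neq q'$, the bounds $p > x/2 > 2r^2$ and admissibility of $(h_j)$ give $\omega_{\mathcal{L}_p}(p) = 1$ and $\omega_{\tilde{\mathcal{L}}_{q,i}}(p) = \omega_h(p) = r$, so the local ratio is $(1 - 1/p)/(1 - r/p) = 1 + O(r/p) = 1 + O(r/x)$. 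At $s = q' \neq p$, since $q' > x/4 > 2r^2 \geq \max_{j \neq i}|h_j - h_i|$ implies $q' \nmid h_j - h_i$, one gets $\omega_{\tilde{\mathcal{L}}_{q,i}}(q') = 1$ (only $n \equiv 0$) versus $\omega_{\mathcal{L}_p}(q') = r$, contributing another factor $1 + O(r/q') = 1 + O(r/x)$. In the degenerate case $p = q'$ both adjustments collapse into a single local factor equal to $1$. Multiplying yields the claim.

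The only subtle step is the bookkeeping at small primes dividing $q$: the inequality $4y/x < \log x$ is what absorbs every prime factor of $m$ into $B$, so that no extra small-prime discrepancies appear in the ratio. Once that is set up, everything reduces to the routine local calculation at $s \in \{p, q'\}$.
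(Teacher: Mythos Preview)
Your proof is correct and follows essentially the same approach as the paper: both arguments reduce to the local computations $\omega_{\mathcal{L}_p}(s)=\omega_h(s)$ for $s\neq p$, $\omega_{\tilde{\mathcal{L}}_{q,i}}(s)=\omega_h(s)$ for $s\nmid q$, together with the key observation that every prime divisor of $q$ coprime to $B$ must be the large prime $q'\ge x/4$. The only cosmetic difference is that the paper introduces a third singular series $\mathfrak{S}$ (built from $\omega_h$) and shows each of $\mathfrak{S}(\mathcal{L}_p)$, $\mathfrak{S}(\tilde{\mathcal{L}}_{q,i})$ is $(1+O(r/x))\mathfrak{S}$, whereas you compare the two products directly; you are also a bit more explicit about the edge case $p=q'$ and about why $4y/x<\log x$ forces the small part of $q$ into $M\mid B$.
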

\begin{proof}
	Let $\omega(s):=\#\{h_i \mod s\ \ | i=1,\dots,r\} $, and define
	$$\mathfrak{S} := \prod_{\substack{s \text{ prime} \\ s \nmid B}} \left(1 - \frac{\omega(s)}{s}\right)\left(1-\frac{1}{s}\right)^{-r}.$$
	It is sufficient to prove that 	
	\begin{align*}
	\mathfrak{S}(\mathcal{L}_p) =\left(1+O\left(\frac{r}{x}\right)\right)\mathfrak{S},\\ \mathfrak{S}(\tilde{\mathcal{L}}_{q,i})=\left(1+O\left(\frac{r}{x}\right)\right)\mathfrak{S}.
	\end{align*}
	From \eqref{L1}, we see that 
	$n+h_ip\equiv 0\pmod{s}$ if and only if $n\equiv -h_ip \pmod{s}$, thus 
	\begin{align}\label{singularp}
	\omega_{\mathcal{L}_p}(s) =\left\{\begin{array}{lr}
	\omega(s), & \text{if } s\not=p,\\
	1, & \text{if } s =p.
	\end{array}\right.
	\end{align}
	Since $p \geq \frac{x}{2}$, this shows that $	\mathfrak{S}(\mathcal{L}_p) =\left(1+O\left(\frac{r}{x}\right)\right)\mathfrak{S}$.
	
	Similarly, for the linear system \eqref{L2}, if $s\nmid q$, the solutions to $$n\prod_{j\not =i}(q+(h_j-h_i)n)\equiv 0 \pmod{s} ,$$ are $n\equiv 0\pmod{s}$ and  $n\equiv q(h_i-h_j)^{-1}\pmod{s}$ for $h_i-h_j \not \equiv 0 \pmod{s}$, which gives $\omega(s)$ solutions in total. If $s\mid q$, then  $\omega_{\tilde{\mathcal{L}}_{q,i}}(s)=1, $ since $n\equiv 0 \pmod{s}$ is the only solution. Thus  
	\begin{align}\label{singularq}
	\omega_{\tilde{\mathcal{L}}_{q,i}}(s) =\left\{\begin{array}{ll}
	\omega(s), & \text{if } s\nmid q,\\
	1 & \text{if } s|q.
	\end{array}\right.
	\end{align}
	Since $(s,B)=1$ and $q \in \mathcal{MQ}$ we have $s\geq x/4$, and so $\mathfrak{S}(\tilde{\mathcal{L}}_{q,i})=\left(1+O\left(\frac{r}{x}\right)\right)\mathfrak{S}$.
\end{proof}
Now we show how Lemma \ref{Our mega lemma} implies Lemma \ref{existence of good weights}.  Equation \eqref{wpn} in Lemma \ref{existence of good weights} follows directly from the last part Lemma \ref{Our mega lemma} and our choice of $\theta = \frac{1}{3}$.

Define a quantity $\tau$ by
\begin{align*}
\tau &:= 2 \frac{B^r}{\phi(B)^r} \frac{\phi(M)}{M} \mathfrak{S}(\mathcal{L}_p) (\log R)^r (\log x)^r I_r.
\end{align*}
We have $I_r \geq x^{-o(1)}$ by Lemma \ref{Our mega lemma}, and it is easy to check that $\mathfrak{S}(\mathcal{L}_p) \geq x^{-o(1)}$ (see Lemma 8.1 in \cite{Ma2}). By Mertens' theorem $\frac{\phi(M)}{M} \geq x^{-o(1)}$, so we deduce that $\tau\geq x^{-o(1)}$. This choice of $\tau$ then yields \eqref{nsum} by the first part of Lemma \ref{Our mega lemma}.

Define a quantity $u$ by
\begin{align*}
u := \frac{\phi(B)}{B} \frac{M}{\phi(M)} \frac{\log R}{\log x} \frac{r J_r}{2 I_r}.
\end{align*}
By the definition of $R$ we have $\frac{\log R}{\log x} \asymp 1$, and Lemma \ref{Our mega lemma} implies $\frac{r J_r}{2 I_r} \asymp \log r$. We also have the bound $\frac{\phi(B)}{B} \frac{M}{\phi(M)} \asymp 1$, since
\begin{align*}
\frac{\phi(B^*)}{B^*} = 1 + O \left( \frac{1}{\log_2 x} \right).
\end{align*}
It follows that $u \asymp \log r$. Taking these definitions of $u$ and $\tau$ and using the second part of Lemma \ref{singular}, we obtain \eqref{psum} from the second part of Lemma \ref{Our mega lemma}.

\section{Construction of Sieve Weights}
In this section we give the construction of the weights $w(p,n)$ and prove Lemma \ref{Our mega lemma}. Much of this section is similar to Sections 7 and 8 of \cite{Fo}. Additionally, we rely on definitions and concepts introduced in \cite{Ma2}. Readers acquainted with either of those papers will find this section familiar.

We observe that we cannot immediately apply the general results of Maynard \cite{Ma2} to prove Lemma \ref{Our mega lemma}, since the linear forms in \eqref{psum in 7point1} vary with $p$. Some preparatory work is therefore required.

We briefly touch upon the subject of Siegel zeros before discussing our weights $w(p,n)$. In order for our weights to have the desired properties, we will need to ``avoid'' Siegel zeros.
\begin{lemma}\label{Siegel zero lemma}
Let $Q \geq 100$. Then there exists a quantity $B^* = B_Q^*$ which is either equal to one or is a prime of size $\gg \log_2 Q$ with the property that
\begin{align*}
1 - \sigma \gg \frac{1}{\log(Q(1+|t|))}
\end{align*}
whenever $L(\sigma + it,\chi)=0$ and $\chi$ is a character with modulus $q \leq Q$ and $(q,B^*)=1$.
\end{lemma}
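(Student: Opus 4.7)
The plan is to derive this lemma as a consequence of the classical Landau--Page theorem together with a mild input about the arithmetic structure of exceptional moduli. First, I would recall Landau--Page: there is an absolute constant $c_0>0$ such that for every primitive Dirichlet character $\chi$ of conductor $q\leq Q$, the L-function $L(s,\chi)$ has no zero in the region $\operatorname{Re}(s)\geq 1-c_0/\log(Q(1+|t|))$, with the sole possible exception of one real simple zero $\beta_0$ of at most one real primitive ``exceptional'' character $\chi^*$ of some conductor $q^*\leq Q$. This immediately isolates the obstruction: only the exceptional character, if it exists, can violate the desired bound, and any imprimitive character $\chi$ mod $q\leq Q$ inherits its nontrivial zeros from the primitive character that induces it.

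If no exceptional character exists for the given $Q$, I would simply set $B^*=1$. Then for every $\chi$ mod $q\leq Q$ (induced by some $\tilde\chi$ mod $\tilde q$ with $\tilde q\mid q$), Landau--Page applied to $\tilde\chi$ gives the claimed lower bound on $1-\sigma$ since the Euler factors removed in passing from $\tilde\chi$ to $\chi$ contribute no zeros in the relevant region.

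If an exceptional $\chi^*$ mod $q^*$ exists, I would choose $B^*$ to be a prime divisor of $q^*$. The mechanism is that for any $q\leq Q$ with $(q,B^*)=1$, any primitive character $\tilde\chi$ mod $\tilde q$ inducing a character mod $q$ satisfies $\tilde q\mid q$, so $(\tilde q,B^*)=1$; in particular $\tilde q\neq q^*$ since $B^*\mid q^*$. By the uniqueness clause of Landau--Page, $\tilde\chi\neq \chi^*$, so $L(s,\tilde\chi)$ enjoys the full zero-free region, and hence so does $L(s,\chi)$. Thus the proof reduces to the claim that $q^*$ necessarily possesses a prime divisor of size $\gg\log_2 Q$.

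For this final step, which I expect to be the main obstacle, I would argue as follows. Suppose for contradiction that every prime divisor of $q^*$ is at most $C\log_2 Q$ for a small constant $C$. Then $q^*$ is very smooth, and one can invoke quantitative results on Siegel zeros for smooth moduli (of the Heath-Brown/Tatuzawa type, exploiting the factorization $L(s,\chi^*)=\prod_{p\mid q^*}(\cdots)$ and the effective positivity of class numbers of real quadratic fields whose discriminant has only small prime factors) to conclude that $1-\beta_0\gg 1/\log Q$ unconditionally. This contradicts the hypothesis that $\beta_0$ is exceptional, forcing $q^*$ to admit a prime factor of the desired size. Once such a prime is singled out as $B^*$, we obtain a prime $B^*\gg \log_2 Q$ enjoying all the stated properties. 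The whole proof is effective because at no stage do we invoke Siegel's ineffective theorem; only Page's theorem and effective bounds on Siegel zeros of smooth modulus are used.
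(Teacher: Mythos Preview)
The paper does not actually prove this lemma; it simply quotes it as Corollary~6 of \cite{Fo}. Your overall framework---apply Landau--Page, set $B^*=1$ if there is no exceptional character, and otherwise take $B^*$ to be a prime factor of the exceptional conductor $q^*$---is exactly the standard argument and is what \cite{Fo} does.

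There is, however, a genuine gap in your Step~4. You assert that if every prime divisor of $q^*$ is at most $C\log_2 Q$ then ``$q^*$ is very smooth'' and appeal to unspecified ``results on Siegel zeros for smooth moduli (of the Heath-Brown/Tatuzawa type)''. Smoothness by itself does not bound $q^*$; a $Y$-smooth integer can be as large as $Q$. The missing arithmetic input is that the conductor of a \emph{real primitive} character has squarefree odd part and $2$-part dividing $8$. Hence if every prime factor of $q^*$ is at most $c\log_2 Q$, then
\[
q^* \le 8 \prod_{2<p\le c\log_2 Q} p \ll (\log Q)^{(1+o(1))c}.
\]
At this point one only needs the elementary effective bound $1-\beta_0 \gg (q^*)^{-1/2}(\log q^*)^{-2}$, which follows from $h(d)\ge 1$ via the class number formula and the mean value theorem applied to $L(s,\chi^*)$ near $s=1$. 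This yields $1-\beta_0 \gg (\log Q)^{-c/2-o(1)}$, which for small $c$ exceeds $c_0/\log Q$ and shows $\beta_0$ was not exceptional after all. Your references to Heath-Brown and Tatuzawa, and the formula ``$L(s,\chi^*)=\prod_{p\mid q^*}(\cdots)$'', are not the right tools here; the argument is more elementary than you suggest, but it does require the squarefree structure of $q^*$, which you never invoke.
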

\begin{proof}
This is Corollary 6 of \cite{Fo} with minor changes to notation.
\end{proof}

We use this lemma below with $Q = \exp (c \sqrt{\log x})$, so that $B^*$ is either one or is a prime of size $\log_2 x \ll B^* \leq \exp (c \sqrt{\log x})$.

We define $W := \prod_{p \leq 2r^2, p \nmid B} p$. For $p$ not dividing $B$, let $a_{p,1}(\mathcal{L}) < \cdots < a_{p,\omega_{\mathcal{L}}(p)}(\mathcal{L})$ be the elements $n$ of $\{1,\ldots,p\}$ for which $p|\prod_{i=1}^r L_i(a_{p,i})$. If $p$ is also coprime to $W$, then for each $1 \leq c \leq \omega_{\mathcal{L}}(p)$, let $j_{p,c} = j_{p,c}(\mathcal{L})$ be the least element of $\{1,\ldots,r\}$ such that $p|L_{j_{p,c}}(a_{p,c}(\mathcal{L}))$.

Let $\mathcal{D}_r(\mathcal{L})$ denote the set
\begin{align*}
\mathcal{D}_r(\mathcal{L}) &:= \{(d_1,\ldots,d_r) \in \mathbb{N}^r : \mu^2(d_1\cdots d_r)=1; (d_1\cdots d_k,WB)=1; \\
&(d_j,p)=1 \text{ whenever } p \nmid WB \text{ and } j \neq j_{p,1},\ldots,j_{p,\omega_\mathcal{L}(p)}\}.
\end{align*}
We have the singular series
\begin{align*}
\mathfrak{S}_{WB} (\mathcal{L}) &:= \prod_{p \nmid WB} \left(1 - \frac{\omega_\mathcal{L}(p)}{p} \right) \left(1 - \frac{1}{p} \right)^{-r}.
\end{align*}
Define the function $\varphi_{\omega_\mathcal{L}}(d) := \prod_{p|d} (p-\omega_\mathcal{L}(p))$, and let $R$ be a quantity of size $x^{\theta/10} \leq R \leq x^{\theta/3}$, where $0<\theta<1$ is an absolute constant. We set $F$ to be a smooth function supported on the simplex $\mathcal{R}_r := \{(x_1,\ldots,x_r) \in \mathbb{R}^r : x_i \geq 0, \sum_i x_i \leq 1\}$, and for any $(a_1,\ldots,a_r) \in \mathcal{D}_r(\mathcal{L})$ we define
\begin{align*}
y_{(a_1,\ldots,a_r)}(\mathcal{L}) &:= \frac{\mathds{1}_{\mathcal{D}_r(\mathcal{L})}(a_1,\ldots,a_r) W^r B^r}{\phi(WB)^r} \mathfrak{S}_{WB}(\mathcal{L}) F \left(\frac{\log a_1}{\log R},\ldots, \frac{\log a_r}{\log R} \right).
\end{align*}
For any $(d_1,\ldots,d_r) \in \mathcal{D}_r(\mathcal{L})$ we define
\begin{align*}
\lambda_{(d_1,\ldots,d_r)}(\mathcal{L}) &:= \mu(d_1\cdots d_r) d_1 \cdots d_r \sum_{d_i|a_i, \ \forall i} \frac{y_{(a_1,\ldots,a_r)}(\mathcal{L})}{\varphi_{\omega_\mathcal{L}}(a_1\cdots a_r)},
\end{align*}
and then define the function $w = w_{r,\mathcal{L},B,R}: \mathbb{Z} \rightarrow \mathbb{R}^+$ by
\begin{align*}
w(n) &:= \left(\sum_{d_i|L_i(n), \  \forall i} \lambda_{(d_1,\ldots,d_r)}(\mathcal{L}) \right)^2.
\end{align*}
Since $F$ is supported on $\mathcal{R}_r$ we note that $\lambda_{(d_1,\ldots,d_r)}(\mathcal{L})$ and $y_{(a_1,\ldots,a_r)}(\mathcal{L})$ are supported on
\begin{align*}
S_r(\mathcal{L}) &:= \mathcal{D}_r(\mathcal{L}) \cap \left\{(d_1,\ldots,d_r) : \prod_{i=1}^r d_i \leq R \right\}.
\end{align*}

Recall that $\{h_1,\ldots,h_r\}$ is an admissible $r$-tuple contained in $[0,2r^2]$. Set $R = (x/4)^{\theta/3}$. We define the function $w: \mathcal{P} \times \mathbb{Z} \rightarrow \mathbb{R}^+$ by 
\begin{align*}
w(p,n) := \mathds{1}_{[-y,y]}(n) w_{r,\mathcal{L}_p,B,R}(n)
\end{align*}
for $p \in \mathcal{P}$ and $n \in \mathbb{Z}$, with $\mathcal{L}_p=\{L_{p,i}, i=1,\dots,r\}$ as defined in \eqref{L1} and $w_{r,\mathcal{L}_p,B,R}$ as above. The set $\mathcal{L}_p$ is admissible since $\{h_1,\ldots,h_r\}$ is admissible. Following the proof of Lemma \ref{singular} we find that
\begin{align*}
\mathfrak{S}_{BW}(\mathcal{L}_p) &= \left(1 + O \left(\frac{r}{x} \right) \right) \mathfrak{S}_{BW}
\end{align*}
uniformly in $p \in \mathcal{P}$ and  $\mathfrak{S}_{BW}$  independent of $p$. We also find that $S_r(\mathcal{L}_p)$ is independent of $p$. In fact, when $s\nmid WB$ and $s\leq R$ we have $w_{\mathcal{L}_p}(s)=r$, since $h_i\leq 2r^2< s$ and $s\not=p$.
This implies
\begin{align*}
\lambda_{(d_1,\ldots,d_r)}(\mathcal{L}_p) = \left(1 + O \left(\frac{r}{x} \right) \right) \lambda_{(d_1,\ldots,d_r)},
\end{align*}
for some $\lambda_{(d_1,\ldots,d_r)}$ independent of $p$ and where the error term is independent of $(d_1,\ldots,d_r)$.

To estimate the sums appearing in Lemma \ref{Our mega lemma}, we appeal to the results of \cite{Ma2}. In order to state these results, we require some notation and definitions.

Let $L(n) = an+b$ be a linear form, $a \neq 0$, where $a,b \in \mathbb{Z}$. Let $\mathcal{A}$ be a set of integers and $\mathscr{P}$ a set of primes. We define sets
\begin{align*}
\mathcal{A}(x) &:= \{n \in \mathcal{A} : x \leq n \leq 2x\}, \\
\mathcal{A}(x;q,a) &:= \{n \in \mathcal{A}(x) : n \equiv a (q)\}, \\
\mathscr{P}_{L,\mathcal{A}}(x) &:= L(\mathcal{A}(x)) \cap \mathscr{P}, \\
\mathscr{P}_{L,\mathcal{A}}(x;q,a) &:= L(\mathcal{A}(x;q,a)) \cap \mathscr{P}.
\end{align*}
Define $\phi_L(q):=\phi(|a|q)/\phi(|a|).$
\begin{defn}[Hypothesis 1, \cite{Ma}]\label{Hypothesis 1}
Let $x$ be a large quantity, $\mathcal{A}$ a set of integers, and $\mathcal{L} = \{L_1,\ldots,L_r\}$ a finite set of linear forms, and $B$ a natural number. We allow $\mathcal{A},\mathcal{L},r,$ and $B$ to vary with $x$. Let $0<\theta < 1$ be a fixed quantity independent of $x$, and let $\mathcal{L}'$ be a subset of $\mathcal{L}$. We say that the tuple $(\mathcal{A},\mathcal{L},\mathscr{P},B,x,\theta)$ obeys Hypothesis 1 at $\mathcal{L}'$ if we have the following three estimates:
\begin{enumerate}
\item ($\mathcal{A}(x)$ is well-distributed in arithmetic progressions) We have
\begin{align*}
\sum_{q \leq x^\theta} \max_{a} \left|\#\mathcal{A}(x;q,a) - \frac{\#\mathcal{A}(x)}{q} \right| \ll \frac{\#\mathcal{A}(x)}{(\log x)^{100r^2}}.
\end{align*}
\item ($\mathscr{P}_{L,\mathcal{A}}(x)$ is well-distributed in arithmetic progressions) For any $L \in \mathcal{L}'$ we have
\begin{align*}
\sum_{\substack{q \leq x^\theta \\ (q,B)=1}} \max_{a: (L(a),q)=1} \left|\#\mathscr{P}_{L,\mathcal{A}}(x;q,a) - \frac{\#\mathscr{P}_{L,\mathcal{A}}(x)}{\phi_L(q)} \right| \ll \frac{\#\mathscr{P}_{L,\mathcal{A}}(x)}{(\log x)^{100r^2}}.
\end{align*}
\item ($\mathcal{A}(x)$ is not too concentrated) For any $q \leq x^\theta$ and $a \in \mathbb{Z}$ we have
\begin{align*}
\#\mathcal{A}(x;q,a) \ll \frac{\#\mathcal{A}(x)}{q}.
\end{align*}
\end{enumerate}
\end{defn}
We will only need Definition \ref{Hypothesis 1} in the following special case.
\begin{lemma}\label{Hypothesis 1 satisfied}
Let $x$ be a large quantity. Then there exists a natural number $B^* \leq x$, which is either one or a prime, such that the following holds. Let $\mathcal{A} = \mathbb{Z}$, let $\mathscr{P} = \{p : p \nmid k\}$, and let $\theta = \frac{1}{3}$. Let $\mathcal{L} = \{L_1,\ldots,L_r\}$ be a finite set of linear forms $L_i(n) = a_in+b_i$ (which may depend on $x$) satisfying $r \leq \log^{1/5} x$, and $|a_i|,|b_i| \leq x^\alpha$ for some absolute constant $\alpha > 0$. Let $x/2 \leq y \leq x \log^2 x$, and let $\mathcal{L}' = \varnothing$ or $\mathcal{L}' = \{n\}$. Then $(\mathcal{A},\mathcal{L},\mathscr{P},B,y,\theta)$ obeys Hypothesis 1 at $\mathcal{L}'$ with absolute implied constants.
\end{lemma}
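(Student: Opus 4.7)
My plan is to verify the three conditions of Definition~\ref{Hypothesis 1} in turn, with the main work concentrated in condition (2). Conditions (1) and (3) are essentially free given $\mathcal{A}=\mathbb{Z}$: one has $\#\mathcal{A}(y;q,a) = y/q + O_{\leq}(1)$ and $\#\mathcal{A}(y) = y + O_{\leq}(1)$, so the discrepancies in (1) sum over $q \leq y^{1/3}$ to $O(y^{1/3})$, while (3) is immediate. Since $r \leq (\log y)^{1/5}$, one has
\[
(\log y)^{100 r^2} \leq \exp\bigl(100 (\log y)^{2/5} \log_2 y\bigr) = \exp\bigl(o(\sqrt{\log y})\bigr) = y^{o(1)},
\]
so $y^{1/3}$ is much smaller than $y/(\log y)^{100 r^2}$ for large $y$, settling (1).

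For condition (2), the assumption $\mathcal{L}' \subseteq \{L : L(n) = n\}$ means I only need to handle the identity form, for which $\phi_L(q) = \phi(q)$ and $\mathscr{P}_{L,\mathcal{A}}(y)$ is the set of primes in $[y,2y]$ not dividing $k$. Since only $\omega(k)$ primes divide $k$, replacing these counts by the standard $\pi(t;q,a)$ and $\pi(t)$ costs $O(\omega(k))$ per residue class and $O(\omega(k) y^{1/3})$ in total, which is negligible against the target. Differencing between $t=2y$ and $t=y-1$, it therefore suffices to establish, uniformly for $t \asymp y$,
\[
\sum_{\substack{q \leq y^{1/3}\\ (q,B)=1}} \max_{(a,q)=1} \Bigl| \pi(t;q,a) - \frac{\pi(t)}{\phi(q)} \Bigr| \ll y \exp\bigl(-c\sqrt{\log y}\bigr).
\]

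To prove this bound I will apply Lemma~\ref{Siegel zero lemma} with $Q = x$, producing $B^* \leq x$, either $1$ or a prime of size $\gg \log_2 x$, such that no $L(s,\chi)$ with $\chi$ of modulus $q \leq x$ and $(q,B^*)=1$ has a zero in a standard zero-free region. Since $B$ in Hypothesis~1 is to be taken as a multiple of $B^*$ (in the main construction, $B = B^* M$), the coprimality $(q,B)=1$ forces $(q,B^*)=1$, and in this restricted range the usual proof of Bombieri--Vinogradov (large sieve combined with an effective Siegel--Walfisz estimate, the latter now holding with $\exp(-c\sqrt{\log t})$-type savings thanks to the enlarged zero-free region) produces the displayed inequality. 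Combined with $(\log y)^{100 r^2 + O(1)} = \exp(o(\sqrt{\log y}))$ from the first paragraph, this comfortably dominates the Hypothesis~1 target $\#\mathscr{P}_{L,\mathcal{A}}(y)/(\log y)^{100 r^2}$.

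The main obstacle, and essentially the only one, is that the exponent $100 r^2$ in Hypothesis~1 grows with $y$, so a black-box appeal to Bombieri--Vinogradov with a fixed polynomial saving $(\log y)^{-A}$ does not suffice. The whole point of introducing $B^*$ is to upgrade the available savings from $(\log y)^{-A}$ to $\exp(-c\sqrt{\log y})$ by ruling out a possible Siegel zero; every other step in the argument is routine bookkeeping.
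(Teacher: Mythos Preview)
Your proposal is correct and follows essentially the same approach as the paper: dispose of (1) and (3) trivially, then reduce (2) to an effective Bombieri--Vinogradov estimate with $\exp(-c\sqrt{\log y})$ savings by excising a possible exceptional modulus via Lemma~\ref{Siegel zero lemma}. The only minor differences are that the paper applies Lemma~\ref{Siegel zero lemma} with $Q = \exp(c\sqrt{\log x})$ rather than your $Q = x$ (yielding the sharper bound $B^* \leq \exp(c\sqrt{\log x})$ used elsewhere in the construction), and it explicitly invokes the standing hypothesis $\omega(k) = x^{o(1)}$ from Theorem~\ref{thm} to bound the contribution of primes dividing $k$, which you leave implicit.
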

\begin{proof}
Parts (1) and (3) of Hypothesis 1 are straightforward to verify, so it remains to check (2). If $\mathcal{L}' = \varnothing$ then we are done, so assume $\mathcal{L}' = \{n\}$.

The set $\{p : y < p \leq 2y, p \nmid k\}$ differs from $\{p : y < p \leq 2y\}$ by a set of size $x^{o(1)}$, by our assumption on the number of distinct prime divisors of $k$. Hence
\begin{align*}
\Big|\#\mathscr{P}_{L,\mathcal{A}}(y;q,a) &- \frac{\#\mathscr{P}_{L,\mathcal{A}}(y)}{\phi_L(q)} \Big| \\
&= \left|\pi(2y;q,a)-\pi(y;q,a) - \frac{\pi(2y) - \pi(y)}{\phi(q)} \right| + O(x^{o(1)}).
\end{align*}
Using Lemma \ref{Siegel zero lemma} with $Q := \exp (c \sqrt{\log x})$ and modifying a standard proof of the Bombieri-Vinogradov theorem (as in Lemma 7.2 of \cite{Fo}, for example), we find that
\begin{align*}
\sum_{\substack{q \leq x^{1/2-\epsilon} \\ (q,B^*)=1}} \max_{a: (a,q)=1} \left|\#\mathscr{P}_{L,\mathcal{A}}(y;q,a) - \frac{\#\mathscr{P}_{L,\mathcal{A}}(y)}{\phi_L(q)} \right| &\ll y \exp(-c\sqrt{\log x}) + O(x^{1/2 - \epsilon + o(1)}) \\
&\ll \frac{x}{(\log x)^{100r^2}},
\end{align*}
as desired.
\end{proof}

We have the following theorem, which is Theorem 6 of \cite{Fo}.
\begin{theorem}\label{Maynard mega theorem}
Fix $\theta,\alpha > 0$. Then there exists a constant $C = C(\theta,\alpha)$ such that the following holds. Suppose that $(\mathcal{A},\mathcal{L},\mathscr{P},B,x,\theta)$ obeys Hypothesis 1 at some subset $\mathcal{L}'$ of $\mathcal{L}$. Write $r := \#\mathcal{L}$, and suppose that $x \geq C, B \leq x^\alpha$, and $C \leq r \leq (\log x)^{1/5}$. Moreover, assume that the coefficients $a_i,b_i$ of the linear forms $L_i(n) = a_in+b_i$ in $\mathcal{L}$ obey the bounds $|a_i|,|b_i| \leq x^\alpha$ for all $i = 1,\ldots,r$. Then there exists a smooth function $F: \mathbb{R}^r \rightarrow \mathbb{R}$ depending only on $r$ and supported on the simplex $\mathcal{R}_r$, and quantities $I_r,J_r$ depending only on $r$ with
\begin{align*}
I_r \gg (2r\log r)^{-r}, \ \ \ \ \ \ \ \ J_r \asymp \frac{\log r}{r} I_r,
\end{align*}
such that for $w(n)$ given in terms of $F$ as above, the following assertions hold uniformly for $x^{\theta/10} \leq R \leq x^{\theta/3}$.
\begin{itemize}
\item We have
\begin{align*}
\sum_{n \in \mathcal{A}(x)} w(n) = \left(1 + O \left(\frac{1}{(\log x)^{1/10}} \right) \right) \frac{B^r}{\phi(B)^r}\mathfrak{S}(\mathcal{L}) \#\mathcal{A}(x) (\log R)^r I_r.
\end{align*}
\item For any linear form $L(n) = a_Ln + b_L$ in $\mathcal{L}'$ with $a_n$ coprime to $B$ and $L(n) > R$ on $[x,2x]$ we have
\begin{align*}
\sum_{n \in \mathcal{A}(x)} \mathds{1}_{\mathscr{P}}(L(n)) w(n) = &\left(1 + O \left(\frac{1}{(\log x)^{1/10}} \right) \right) \frac{\phi(|a_L|)}{|a_L|} \frac{B^{r-1}}{\phi(B)^{r-1}} \mathfrak{S}(\mathcal{L}) \#\mathscr{P}_{L,\mathcal{A}}(x) (\log R)^{r+1} J_r \\
&+ O \left(\frac{B^r}{\phi(B)^r} \mathfrak{S}(\mathcal{L}) \#\mathcal{A}(x) (\log R)^{r-1} I_r \right).
\end{align*}
\item We have the upper bound $w(n) \ll x^{2\theta/3+o(1)}$ for all $n \in \mathbb{Z}$.
\end{itemize}
Here the implied constants depend only on $\theta,\alpha$, and the implied constants in Hypothesis 1.
\end{theorem}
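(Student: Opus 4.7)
The plan is to follow the standard Maynard--Tao sieve analysis, as developed in \cite{Ma} and generalized in \cite{Ma2}. First, I would expand the square in $w(n)=\bigl(\sum_{d_i\mid L_i(n)}\lambda_{(d_1,\ldots,d_r)}(\mathcal{L})\bigr)^2$ into a double sum indexed by $(d),(e)\in S_r(\mathcal{L})^2$. Interchanging orders of summation and applying part~(1) of Hypothesis~1 with modulus $q=\prod_i[d_i,e_i]\leq R^2\leq x^\theta$, I would replace the inner count $\#\{n\in\mathcal{A}(x):[d_i,e_i]\mid L_i(n)\ \forall i\}$ by its expected value $\omega_{\mathcal{L}}(q)\#\mathcal{A}(x)/q$, where the multiplicativity of $\omega_\mathcal{L}$ (from the CRT) reduces the count to its local factors. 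The standard Maynard change of variables from $\lambda$ to $y$ diagonalizes the resulting quadratic form and, after absorbing the Euler products at primes $p\nmid WB$, collapses it to
\[
\frac{B^rW^r}{\phi(BW)^r}\mathfrak{S}_{WB}(\mathcal{L})\#\mathcal{A}(x)\sum_{(a)}\frac{y_{(a)}^2}{\prod_i\varphi_{\omega_\mathcal{L}}(a_i)}.
\]
Setting $t_i=\log a_i/\log R$ and using the smoothness of $F$, I would approximate the sum over $(a)$ by the Riemann integral $I_r=\int_{\mathcal{R}_r}F(t)^2\,dt$ with multiplicative error $1+O((\log R)^{-1})$, then convert $\mathfrak{S}_{WB}$ to $\mathfrak{S}$ by absorbing the local factors at primes $p\mid W$ (which are all $\leq 2r^2$).

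For the prime sum against $L\in\mathcal{L}'$, the same expansion applies, but the constraint $L(n)\in\mathscr{P}$ with $L(n)>R$ now forces $d_L=e_L=1$, so the sum effectively runs over $r-1$ coordinates. Part~(2) of Hypothesis~1 handles the arithmetic-progression count, producing $\#\mathscr{P}_{L,\mathcal{A}}(x)/\phi_L(\prod_{i\neq L}[d_i,e_i])$ plus negligible error thanks to the coprimality hypothesis $(a_L,B)=1$. The analogous diagonalization then produces
\[
J_r=\int_{\mathcal{R}_{r-1}}\left(\int_{0}^{1-\sum_{j\neq L}t_j}F(t_1,\ldots,t_r)\,dt_L\right)^2\prod_{j\neq L}dt_j,
\]
and the ancillary $O\bigl(\frac{B^r}{\phi(B)^r}\mathfrak{S}(\mathcal{L})\#\mathcal{A}(x)(\log R)^{r-1}I_r\bigr)$ error term absorbs off-diagonal and boundary contributions from local factors at small primes dividing $L$. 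The pointwise bound $w(n)\ll x^{2\theta/3+o(1)}$ follows from the trivial bound $|\lambda_{(d)}(\mathcal{L})|\ll(\log R)^{r+O(1)}$ combined with the divisor bound, since the inner sum $\sum_{d_i\mid L_i(n)}$ over $\prod d_i\leq R$ contributes at most $R^{1+o(1)}x^{o(1)}$ terms.

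The main obstacle, and the heart of the argument, is the explicit construction of $F$ realizing both $I_r\gg(2r\log r)^{-r}$ and $J_r/I_r\asymp(\log r)/r$ simultaneously. Following Maynard, I would take $F$ of product type $F(t_1,\ldots,t_r)=\prod_i g(rt_i)\cdot\mathds{1}_{\mathcal{R}_r}(t)$ for a smooth univariate $g$ concentrated near the origin, which reduces the multivariate optimization to a one-dimensional variational problem analyzed in \cite{Ma}. A secondary technical point is that the various error terms from Hypothesis~1 must absorb factors of $(\log R)^{O(r^2)}$ that arise through the change of variables; this is precisely why Hypothesis~1 is stated with saving $(\log x)^{-100r^2}$ rather than an arbitrary fixed power of $\log x$, and why $\theta$ must stay bounded away from $1$ so that $\prod[d_i,e_i]\leq x^\theta$ remains within the admissible range.
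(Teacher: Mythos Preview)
Your sketch is a reasonable outline of the standard Maynard--Tao argument, but you should be aware that the paper does not prove this theorem at all: it is quoted verbatim as Theorem~6 of \cite{Fo}, which in turn packages Proposition~6.1 of \cite{Ma2}. The paper treats it as a black box and immediately applies it (together with Lemma~\ref{Hypothesis 1 satisfied}) to the two families of linear forms $\mathcal{L}_p$ and $\tilde{\mathcal{L}}_{q,i}$ to deduce Lemma~\ref{Our mega lemma}. So there is nothing to compare against here; your proposal is essentially a sketch of the proof in the cited references rather than of anything in the present paper.

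That said, as a sketch of the underlying argument your outline is broadly correct. One small inaccuracy: the pointwise bound $w(n)\ll x^{2\theta/3+o(1)}$ does not come from bounding the number of terms by $R^{1+o(1)}x^{o(1)}$ (which would give $R^{2+o(1)}$ after squaring, too weak). Rather, one uses the divisor bound to see that the number of $(d_1,\ldots,d_r)$ with $\prod d_i\leq R$ and $d_i\mid L_i(n)$ is $x^{o(1)}$ (since each $L_i(n)\ll x^{1+\alpha}$ has $x^{o(1)}$ divisors), and combines this with $|\lambda_{(d)}|\ll R^{1+o(1)}$ to get $w(n)\ll R^{2+o(1)}\leq x^{2\theta/3+o(1)}$.
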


Note that $B \ll x^2$, say, by the prime number theorem and the bound $B^* \leq \exp (c \sqrt{\log x})$.

We now turn to proving Lemma \ref{Our mega lemma}. The last part of that lemma follows immediately from Theorem \ref{Maynard mega theorem}. Consider the sum $\sum_n w(p,n)$ in Lemma \ref{Our mega lemma}. We have
\begin{align*}
\sum_{n \in \mathbb{Z}} w(p,n) &= \sum_{-y \leq n \leq y} w(p,n)= \sum_{n \in \mathcal{A}(2y)} w_{r,\mathcal{L}_p-3y,B,R}(n) + O(x^{1-c+o(1)})
\end{align*}
where $\mathcal{L}_p - 3y$ denotes the set of linear forms $n \rightarrow n+h_ip - 3y$, which is still admissible. We also have $\mathfrak{S}(\mathcal{L}_p-3y) = \mathfrak{S}(\mathcal{L}_p)$. We now apply the first part of Theorem \ref{Maynard mega theorem} with $x$ replaced by $2y$, $\mathcal{L}' = \varnothing$, and $\mathcal{L} = \mathcal{L}_p - 3y$, using Lemma \ref{Hypothesis 1 satisfied} to obtain Hypothesis 1. Thus
\begin{align*}
\sum_{n \in \mathbb{Z}} w(p,n) &= \left(1 + O \left(\frac{1}{(\log x)^{1/10}} \right) \right) \frac{B^r}{\phi(B)^r} \mathfrak{S}(\mathcal{L}_p) 2y (\log R)^r I_r.\end{align*}

 Fix $q \in \mathcal{MQ}$ and $i \in \{1,\ldots,r\}$, and consider the sum $\sum_p w(p,q-h_ip)$ in Lemma \ref{Our mega lemma}. Consider the linear form $\tilde{\mathcal{L}}_{q,i}$ in \eqref{L2}. Following the proof of Lemma \ref{singular}, we have
 $$\mathfrak{S}_{BW}(\tilde{\mathcal{L}}_{q,i}) = \left(1 + O \left(\frac{r}{x} \right) \right) \mathfrak{S}_{BW},$$
 and similarly 
 \begin{align*}
 \lambda_{(d_1,\ldots,d_r)}(\tilde{\mathcal{L}}_{q,i}) = \left(1 + O \left(\frac{r}{x} \right) \right) \lambda_{(d_1,\ldots,d_r)}.
 \end{align*}
This implies 
\begin{align*}
w_{r,\tilde{\mathcal{L}}_{q,i},B,R}(p) &= \left(1 + O \left(\frac{r}{x} \right) \right) w_{r,\mathcal{L}_p,B,R}(q-h_i p)
\end{align*}
whenever $p \in \mathcal{P}$ (the implicit $d_i$ summation variable on both sides is equal to 1). Thus
\begin{align*}
\sum_{p \in \mathcal{P}} w(p,q-h_ip) = \left(1 + O \left(\frac{r}{x} \right) \right) \sum_{n \in \mathcal{A}(x/2)} 1_\mathscr{P}(\tilde{\mathcal{L}}_{q,i,i}(n))w_{r,\tilde{\mathcal{L}}_{q,i},B,R}(n),
\end{align*}
which is equal to
\begin{align*}
\left(1 + O \left(\frac{1}{\log_2^{10} x} \right) \right) &\frac{B^{r-1}}{\phi(B)^{r-1}} \mathfrak{S}(\tilde{\mathcal{L}}_{q,i}) \frac{x}{2\log x} (\log R)^{r+1} J_r \\
&+ O \left(\frac{B^r}{\phi(B)^r} \mathfrak{S}(\tilde{\mathcal{L}}_{q,i}) x (\log R)^{r-1} I_r \right)
\end{align*}
by Theorem \ref{Maynard mega theorem}. This completes the proof of Lemma \ref{Our mega lemma}.

\section{Concluding Remarks}
Note that in the proof of Theorem \ref{thm} we chose $y$ as large as possible, essentially subject to the condition
\begin{align*}
\frac{\sigma}{u} \frac{M}{\phi(M)} \frac{y}{\log y} \ll \frac{x}{\log x}.
\end{align*}
Here we were able to take $u = \log r \gg \log_2 x$, in light of the results of \cite{Ma2}. Under the Hardy-Littlewood prime tuples conjecture, one could take $u = r$ rather that $u = \log r$. The Hardy-Littlewood prime tuples conjecture suggests that the number of integers $n \leq y$ such that $n + h_1 , \ldots , n + h_r$ are all prime is $\sim c \frac{y}{\log^r y}$, and so with this in mind we do not expect to be able to take $r$ too large. With this in mind, we predict that under the Hardy-Littlewood prime tuples conjecture, one might be able to show


\begin{align*}
P(k) \geq \phi(k) \log k \log_2^{2 - o(1)} k,
\end{align*}
which appears to be the limit of the current method. We remark that this in the same spirit as what appears in equation 1.5 of \cite{Mai}, where Maier and Pomerance considered the completely analogous problem of large gaps between primes. 

The main obstacle to further improvements and to removing the restriction on the number of prime factors of $k$ in Theorem \ref{thm} is our inability to work with prime factors larger than $\log k$. We agree with Pomerance's \cite{Po} opinion that the hardest case is when $k$ is a primorial.

We observe that the methods we use to prove Theorem \ref{thm} only identify $\log k$-rough numbers. Inserting this into the heuristic in Section \ref{Heuristic section}, one might expect that the least $\log k$-rough number in an arithmetic progression modulo $k$ has order $\asymp \phi(k) \log k \log_2 k$ (here we have used that the number of $\log k$-rough numbers less than $k$ is $\sim e^{-\gamma} k/\log_2 k$). However, we expect this estimate for the least $\log k$-rough number to be wrong, in light of what one could prove assuming a uniform prime tuples conjecture. We believe the basic reason is the strength of smooth number estimates. 
	
\appendix
\section{Numerical Data}
Here is a complete table of values of $k$ such that $P(k)/\phi(k)\log\phi(k)\log k>2-0.05$ for $k\leq 10^6$, where $R(k):=P(k)\pmod k$. We remark that our probabilistic heuristic predicts that for any $\epsilon > 0$, $P(k)/\phi(k)\log\phi(k)\log k>2-\epsilon$ infinitely often.  
\begin{align*}
\begin{array}{|ccccc|}
\hline k  & P(k) & R(k) & P(k)/\phi(k)\log\phi(k)\log k & \text{Factorization}\\\hline
4 & 5 & 1 & 2.60171 & 2^2 \\
5 & 19 & 4 & 2.12894 & 5^1 \\
6 & 7 & 1 & 2.81814 & 2^1 3^1 \\
461 & 37363 & 22 & 2.15991 & 461^1 \\
1623 & 123203 & 1478 & 2.20945 & 3^1 541^1 \\
1945 & 169937 & 722 & 1.96788 & 5^1 389^1 \\
3246 & 123203 & 3101 & 2.02004 & 2^1 3^1 541^1 \\
10948 & 642973 & 7989 & 1.96035 & 2^2 7^1 17^1 23^1 \\
23636 & 2183963 & 9451 & 2.08501 & 2^2 19^1 311^1 \\
199432 & 27361751 & 39567 & 1.98407 & 2^3 97^1 257^1 \\
297491 & 94537921 & 233274 & 2.00862 & 521^1 571^1 \\
732509 & 267676337 & 310552 & 2.00382 & 732509^1 \\
760303 & 280096127 & 304623 & 2.014 & 863^1 881^1 \\
783968 & 136749709 & 339277 & 1.99594 & 2^5 24499^1 \\
903797 & 342032531 & 397265 & 2.01678 & 739^1 1223^1 \\\hline
\end{array}
\end{align*}
Note when $k=636184$, $P(k)=56470591$ and $R(k)=486399$, whereas in \cite{Wag2}, they obtained $P(k)=116415479$ and $R(k)=629991$. We believe that they missed the prime $p=8900383$, which satisfies $8900383\equiv 629991 \pmod {636184}$. 

Here is a table of some statistics of the quantity  $P(k)/\phi(k)\log\phi(k)\log k$.
\begin{align*}
\begin{array}{|c|c|c|}
\hline P(k)/\phi(k)\log\phi(k)\log k & \text{number of $k$'s}\leq 10^6 & \text{proportion} \\\hline
>2.05 & 15& 1.5\times 10^{-5}\\
1.95\sim1.05 & 377310 &0.377\\ 
<0.5& 17 & 1.7\times 10^{-5}\\\hline
\end{array}
\end{align*}

We also provide a table of values of $k \leq 10^6$ such that $P(k)/\phi(k)\log\phi(k)\log k< 0.5$ .
\begin{align*}
\begin{array}{|ccccc|}
\hline k  & P(k) & R(k) & P(k)/\phi(k)\log\phi(k)\log k & \text{Factorization}\\\hline
44 & 113 & 25 & 0.498394 & 2^2 11^1 \\
51 & 197 & 44 & 0.45178 & 3^1 17^1 \\
75 & 293 & 68 & 0.45992 & 3^1 5^2 \\
102 & 197 & 95 & 0.384071 & 2^1 3^1 17^1 \\
105 & 419 & 104 & 0.484512 & 3^1 5^1 7^1 \\
110 & 331 & 1 & 0.477234 & 2^1 5^1 11^1 \\
130 & 389 & 129 & 0.430084 & 2^1 5^1 13^1 \\
150 & 293 & 143 & 0.396297 & 2^1 3^1 5^2 \\
198 & 643 & 49 & 0.494951 & 2^1 3^2 11^1 \\
210 & 419 & 209 & 0.421704 & 2^1 3^1 5^1 7^1 \\
228 & 761 & 77 & 0.455197 & 2^2 3^1 19^1 \\
246 & 883 & 145 & 0.457522 & 2^1 3^1 41^1 \\
312 & 1153 & 217 & 0.458184 & 2^3 3^1 13^1 \\
420 & 1201 & 361 & 0.453772 & 2^2 3^1 5^1 7^1 \\
462 & 1709 & 323 & 0.48484 & 2^1 3^1 7^1 11^1 \\
528 & 2473 & 361 & 0.48579 & 2^4 3^1 11^1 \\
570 & 2221 & 511 & 0.48907 & 2^1 3^1 5^1 19^1 \\\hline
\end{array}
\end{align*}

\section*{Acknowledgments} 
We thank Kevin Ford for making us aware of this problem, for helpful conversations and useful suggestions, and for originally introducing us to many of the techniques utilized in this work. We thank Tom\'as Silva for his helpful comments. We also thank the anonymous referee for his or her suggestions, which have improved the presentation of this paper. 

This work was completed while the second author was supported by the NSF Graduate Research Fellowship Program under Grant No. DGE-1144245. The third author received support from the NSF grant DMS-1501982.

\end{document}